\newtheorem{theorem}{Theorem}
\newtheorem{lemma}{Lemma}
\newtheorem{proposition}{Proposition}
\theoremstyle{definition}
\newtheorem{definition}{Definition}
\newtheorem{remark}{Remark}
\newtheorem{example}[theorem]{Example}
\newcommand{\R}{\mathbb{R}}
\newcommand{\N}{\mathbb{N}}
\newcommand{\V}{\mathcal{V}}
\newcommand{\D}{\partial}
\newcommand{\cl}[1]{\overline{#1}}
\newcommand{\dif}[1]{\,\mathrm{d}#1}
\newcommand{\wt}[1]{\widetilde{#1}}
\newcommand{\MxN}{M\times N}
\newcommand{\RxMNMN}{\R\times M\times N\times M\times N}
\newcommand{\CTMN}{C_T(M\times N)}
\newcommand{\X}{\times}
\newcommand{\giu}[1]{#1\sb\#} 
\newcommand{\su}[1]{#1\sp\#}
\newcommand{\h}[1]{\widehat{#1}}
\renewcommand{\t}[1]{\widetilde{#1}}
\def\overstrike#1#2{{\setbox0\hbox{$#2$}\hbox to \wd0{\hss
      $#1$\hss}\kern-\wd0\box0}}
\newcommand{\avrg}[1]{\overstrike{#1}{/}}
\DeclareMathOperator{\ind}{\mathrm{ind}}
\DeclareMathOperator{\sign}{\mathrm{sign}}
\title[Periodic perturbations with delay of coupled ODEs]%
{Periodic perturbations with delay of coupled differential 
equations on manifolds with application to a sunflower-like equation}
\author[L.\ Bisconti]{Luca Bisconti}
\email[L.\ Bisconti]{luca.bisconti@unifi.it}
\author[M.\ Spadini]{Marco Spadini}
\email[M.\ Spadini]{marco.spadini@unifi.it}
\keywords{Coupled differential equations, branches of periodic solutions,
fixed point index}
\subjclass{34C25, 34C40}
\begin{document}

\begin{abstract}
  We investigate the structure of the set of $T$-periodic solutions to
  periodically perturbed coupled delay differential equations on
  differentiable manifolds. By using fixed point index and
  degree-theoretic methods we prove the existence of branches of
  $T$-periodic solutions to the considered equations. As main
  application of our methods, we study a generalized version of the
  sunflower equation.
\end{abstract}

\maketitle

\section{Introduction}
In this paper we are concerned with a generalization of \cite{Sp06}
based on \cite{BiSp13}, which deals with the structure of the set of
harmonic solutions of periodically perturbed coupled ODEs on
manifolds. So doing, we bridge the gap between \cite{BeCaFuPe} and
\cite{BiSp13,FS09} in the sense that the main results of those papers
can be deduced from ours (see Remark \ref{genRit} below).  As a
further motivation we will present an application of our methods to a
generalized version of the well-known sunflower equation
(cf.~\eqref{eq:sunflower}), which is a second order delay differential
equation used to model the helical movement of the tip of a sunflower
plant (see, e.g., \cite{Somolinos, DelayDiffEq}).

Let us describe more precisely our setting. Let $M\subseteq\R^k$ and
$N\subseteq\R^s$ be boundaryless smooth manifolds, let
$\mathfrak{f}\colon\RxMNMN\to\R^k$ be tangent to $M$, and let
$\mathfrak{g}\colon\MxN\to\R^s$ and $\mathfrak{h}\colon\RxMNMN\to\R^s$
be tangent to $N$: This means that, for any $(t,p,q,v,w)\in\RxMNMN$,
then $\mathfrak{g}(p,q,v,w)$ and $\mathfrak{h}(t,p,q,v,w)$ belong to
the tangent space $T_{q}N$, and $\mathfrak{f}(t,p,q,v,w)$ is in
$T_pM$, respectively. Let also $a\colon\R\to\R$ be continuous. Given
$T>0$, we assume that $\mathfrak f$, $\mathfrak h$ and $a$ are
$T$-periodic in the $t$ variable. Consider the following system of
delay differential equations for $\lambda\geq 0$:
\begin{equation}\label{main}
  \left\{
    \begin{array}{l}
      \dot x(t)=\lambda \mathfrak{f}\big(t,x(t),y(t),x(t-r),y(t-r)\big),\\
      \dot y(t)=a(t)\mathfrak{g}\big(x(t),y(t)\big)+
      \lambda \mathfrak{h}\big(t,x(t),y(t),x(t-r),y(t-r)\big),
    \end{array}
  \right.
\end{equation}
where the time lag $r>0$ is given. This system is equivalent to a
single parameter-dependent delay differential equation on the product
manifold $\MxN\subseteq\R^{k+s}$.

Denote by $C_T(M)$ and $C_T(N)$ the spaces of $T$-periodic continuous
functions from $\R$ to $M$ and $N$, respectively, with the topology of
uniform convergence. We investigate the properties of the set of the
$T$-periodic triples (or briefly $T$-triples) of \eqref{main}, i.e.\
of those triples $(\mu, x, y)\in [0,\infty)\times C_T(M)\times
C_T(N)$, where $(x,y)$ is a solution to \eqref{main} when $\lambda
=\mu$.  In particular, we shall give conditions for the existence of a
noncompact connected component of \emph{nontrivial $T$-triples} (which
we call a ``\emph{branch}'') emanating from the set $\nu^{-1}(0)$,
where $\nu:\MxN\to\R^{k+s}$ is the vector field, tangent to
$\MxN\subseteq\R^{k+s}$, given by
\begin{equation}\label{eq:utility-map-for-the-degree}
  \nu(p,q)=\big(w_\mathfrak{f}(p,q),\,  \mathfrak{g}(p,q)\big),
\end{equation}
with
$w_\mathfrak{f}(p,q):=\frac{1}{T}\int_0^T\mathfrak{f}(t,p,q,p,q)\dif{t}$.
In the present setting, a $T$-triple $(\lambda, x, y)$ of \eqref{main}
is said to be \emph{trivial} if $\lambda = 0$ and $(x, y)$ is
constant.

It is tempting to try to achieve the desired generalization of \cite{Sp06} by simply 
using a time-transformation as in \cite{BIS:2011, Sp03} to get rid of the factor $a(t)$ 
in \eqref{main} and then adapt the argument of \cite{BeCaFuPe, FS09} to the present
case.  Nevertheless, this simple procedure does not work because the transformed 
perturbing term would result in a form inappropriate for our methods.  In fact, the 
time-transformation used in \cite{Sp03} does not preserve the fixed-delay structure. 
% Thus, different techniques seem necessary to approach the present problem.
Instead, to prove our result, we follow the lead of \cite{BiSp13} and combine the techniques 
of \cite{Sp03} and \cite{Sp06}. 
% For this reason, we will not always provide 
% full-fledged proofs for all results but sketch them, referring to the above papers 
% for the basic facts and leaving the details to the reader.

\smallskip
As main application of our methods we study the set of periodic
solutions of a parametrized second order delayed differential equation
(the one called \emph{sunflower-like equation}).
This parametrized equation is derived in Section~\ref{sec:sunflower-like} 
in a fairly direct way starting from the sunflower equation, and reads as follows:
\begin{equation*} %\label{eq:sunf}
    \ddot y(t) = a(t) \dot y(t) + \lambda \phi\big(y(t), y(t-r)\big) ,
    \,\, \lambda \geq 0,
\end{equation*}
where $a\colon\R\to\R$ and $\phi\colon\R^2\to\R$ are continuous and $a$ is $T$-periodic 
with $\avrg a\ne 0$.  By using 
elementary transformations we will show how the above equation can be 
equivalently rewritten as systems of delay differential equations of type \eqref{main} 
to which our methods apply.  Thus, we find in a natural way a result about the structure
of the set of $T$-periodic solutions of the sunflower-like equation above.
  
\section{Poincar\'e--type translation operator}\label{secPoiType}

Consider the system of equations \eqref{main}. We are interested in its $T$-periodic 
solutions.  Without loss of generality, as suggested in \cite{Fr07}, we will assume
that $T\geq r$. In fact, for $n\in\N$, the system
\eqref{main} and
\begin{equation*}
  \left\{
    \begin{array}{l}
      \dot x(t)=\lambda \mathfrak{f}\Big(t,x(t),y(t),x\big(t-(r-nT)\big),y\big(t-(r-nT)\big)\Big)\\
      \dot y(t)=a(t)\mathfrak{g}\big(x(t),y(t)\big)
      +\lambda \mathfrak{h}\Big(t,x(t),y(t),x\big(t-(r-nT)\big),y\big(t-(r-nT)\big)\Big),
    \end{array}\right.
\end{equation*}
have the same $T$-periodic solutions. Thus, if necessary, one can
replace $r$ with $r-nT$, where $n\in\N$ is such that $0<r-nT\leq T$.

\smallskip 
Let us now introduce some notation. Given any $X\subseteq\R^k$, $\t X$ denotes the metric space 
$C\big([-r,0],X)$ with the distance inherited from the Banach space $\t\R^k=C([-r,0],\R^k)$ with 
the usual supremum norm. 

Given any $(p,q)\in\MxN$, denote by $\su p\in\t M$ and $\su q\in\t N$ the constant functions 
$\su p(t)\equiv p$ and $\su q(t)\equiv q$, $t\in[-r,0]$, respectively. Thus, 
$(\su p,\su q)\in\t\MxN\simeq\t M\X\t N$.  For any $U\subseteq\MxN$, define 
$\su U=\big\{(\su p,\su q)\in\t\MxN:(p,q)\in U\big\}$. Also, given $W\subseteq\t\MxN$, we put 
$\giu W=\big\{(p,q)\in\MxN:(\su p,\su q)\in W\big\}$.  Finally, we will
denote by $C_T(X)$ the metric subspace of the Banach space
$\big(C_T(\R^k),\,\|\cdot\|\big)$ of all the $T$-periodic continuous
maps $x:\R\to X$ (as above, with the usual $C^0$ norm). Observe that
$C_T(X)$ is complete if and only if $X$ is complete (or, equivalently, closed as a subset of
$\R^k$). Nevertheless, since $M$ and $N$ are locally compact,
$C_T(\MxN)\simeq C_T(M)\X C_T(N)$ is always locally complete.

\smallskip Assume now, unless differently stated, that $a$,
$\mathfrak{f}$, $\mathfrak g$ and $\mathfrak h$ are $C^1$. Consider
the map $H$ with domain $\mathcal{D}_H\subseteq\R\X\t M\X\t N\X\R$ in
$\t M\X\t N$ defined by
\begin{equation*}
  H(\lambda,\varphi,\psi, \mu)(\theta)
  =\Big(x_{\lambda,\mu}\big(\varphi,\psi,T+\theta\big),
  y_{\lambda,\mu}\big(\varphi,\psi,T+\theta\big)\Big),\quad
  \theta\in[-r,0],
\end{equation*}
where
$t\mapsto\big(x_{\lambda,\mu}(\varphi,\psi,t),y_{\lambda,\mu}(\varphi,\psi,t)\big)$
denotes the unique maximal solution of the initial-value problem
\[
\left\{
    \def\arraystretch{1.4}
  \begin{array}{lr}
    \!{\begin{aligned}
    &\!\!\dot x(t)=\lambda \big[\mu \mathfrak{f}\big(t,x(t),y(t),x(t-r),y(t-r)\big)
               +(1-\mu)\tfrac{a(t)}{\avrg{a}}w_\mathfrak{f}\big(x(t),y(t)\big)\big],\\
    &\!\!\dot y(t)=a(t)\mathfrak{g}\big(x(t),y(t)\big)
                +\lambda\mu \mathfrak{h}\big(t,x(t),y(t),x(t-r),y(t-r)\big),
    \end{aligned}} & t>0,\\
    \!\!x(t)=\varphi(t),\; y(t)=\psi(t), & \makebox[0pt][r]{$t\in[-r,0]$.}
  \end{array}
\right.
\]
Well known properties of differential equations imply that $\mathcal{D}_H$ is an open 
subset of $\R\X\t M\X\t N\X \R$.  A similar argument shows that the set
$\mathcal{D}':=\{(\varphi,\psi)\in\wt M\X\wt N : (0,\varphi,\psi,1)\in\mathcal{D}_H\}$
is open as well. Also, since we are assuming $T\geq r$ (see above), the Theorem of 
Ascoli-Arzel\`a implies that $H$ is a locally compact map (compare, e.g.\ \cite{Ol69} or 
\cite{BeCaFuPe1}).

\begin{remark}\label{rem:CPP}
Consider the following equation:
  \begin{equation}\label{unperturbed}
    \left\{
      \begin{array}{l}
        \dot x(t)=0,\\
        \dot y(t)=a(t)\mathfrak{g}\big(x(t),y(t)\big),
      \end{array}\right.
  \end{equation}
Given $V\subseteq\wt M\X\wt N$ such that $\cl V\subseteq\mathcal{D}'$ we have that all 
solutions of \eqref{unperturbed} starting at time $t=0$ from $\cl{\giu V}$ are defined 
(at least) for $t\in[0,T]$. An argument similar to, e.g., \cite[Remark 2.3]{Sp03} or 
\cite[Remark 2.1]{BiSp13}) shows that the same assertion holds for \eqref{unperturbed} 
when $a(t)$ is replaced with its average $\avrg{a}$:
  \begin{equation}\label{unp0}
    \left\{
      \begin{array}{l}
        \dot x(t)=0,\\
        \dot y(t)=\avrg{a}\mathfrak{g}\big(x(t),y(t)\big),
      \end{array}\right.
  \end{equation} 
In fact, one could prove that solutions of \eqref{unperturbed}  and of \eqref{unp0},
leaving at time $t=0$ from the same point, coincide at time $t=T$. Thus, $T$-periodic
orbits (images of solutions) of \eqref{unperturbed} and \eqref{unp0} must coincide.
More precisely,  let $\{\Phi_t\}_{t\in\R}$ be the local flow associated to \eqref{unp0}. 
That is, $\Phi\colon U\to\MxN$ is defined on an open subset $U$ of $\R\X\MxN$, containing 
$\{0\}\times\MxN$, with the property that for any $(p,q)\in\MxN$ the curve 
$t\mapsto\Phi_t(p,q)$ is the maximal solution of \eqref{unp0} given the initial condition 
$\Phi_0(p,q)=(p,q)$. 
Then, given $\tau\in\mathbb{R}$, the domain of $\Phi_\tau$ is the open set consisting of 
those points $(p,q)\in\MxN$ for which the maximal solution of \eqref{unp0} starting from 
$(p,q)$ at $t=0$ is defined up to $\tau$. (We are interested, in particular, to the case 
$\tau=T$.) Let $\{\Psi_t\}_{t\in\R}$ be the anologous local flow associated to 
\eqref{unperturbed}. The argument of the above cited remarks show that 
$\Psi_T(p,q) = \Phi_T(p,q)$ whenever this relation makes sense, in particular for all 
$(p,q)\in\cl{\giu V}$.
\end{remark}

\smallskip The following definition is convenient:
\begin{definition}\label{defCPP}
  We say that $V\subseteq \wt M\X\wt N$ has the \emph{constant periodic property for 
  \eqref{unperturbed}} if any $T$-periodic solution $(x,y)$ of Equation \eqref{unperturbed}
  that intersects $\D \giu V$ is constant.  
\end{definition}

We have the following result:

\begin{lemma}\label{lemma1}
  Let $V\subseteq \wt M\X\wt N$ be open and such that
  \[
  Z_V:=\big\{ (\su p,\su q)\in V:\nu(p,q)=0\big\}
  \]
  is compact. Then, there exists an open neighborhood $W\subseteq V$
  of $Z_V$ and $\varepsilon>0$ s.t.\ $[0,\varepsilon]\X \cl{W}\X
  [0,1]\subseteq\mathcal{D}_H$ and $H\big([0,\varepsilon]\X \cl{W}\X
  [0,1]\big)$ is compact.

  Assume in addition that $\giu V$ is relatively compact, $\cl V\subseteq\mathcal{D}'$ and 
  that $V$ has the constant periodic property for \eqref{unperturbed} (Definition \ref{defCPP}). 
  Then $W$ can be taken in such a way that it has the constant periodic property as well. That 
  is, if $(x,y)$ is a $T$-periodic solution of \eqref{unperturbed} intersecting $\D\giu W$, then 
  $(x,y)$ is constant.
\end{lemma}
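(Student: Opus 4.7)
For the first assertion my plan is to leverage that every $(\su p,\su q)\in Z_V$ corresponds to an equilibrium of the unperturbed equation: if $\nu(p,q)=0$ then in particular $\mathfrak{g}(p,q)=0$, so the Cauchy problem defining $H$ at $\lambda=0$ with initial data $(\su p,\su q)$ is solved by the constant function $(p,q)$ on all of $\R$.  Hence $\{0\}\X Z_V\X[0,1]\subseteq\mathcal{D}_H$, and since $\mathcal{D}_H$ is open while $\{0\}\X Z_V\X[0,1]$ is compact, a tube-lemma argument yields $\varepsilon_0>0$ and an open neighbourhood $\mathcal{U}\subseteq V$ of $Z_V$ with $[0,\varepsilon_0]\X\mathcal{U}\X[0,1]\subseteq\mathcal{D}_H$.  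Using the local compactness of $H$ recalled just before the lemma, a finite subcover of $\{0\}\X Z_V\X[0,1]$ by neighbourhoods on which $H$ has relatively compact image produces a further refinement $\mathcal{U}'\subseteq\mathcal{U}$ on which $H$ still has relatively compact image.  Finally, by normality of the ambient metric space I would pick $W$ open with $Z_V\subseteq W$ and $\cl{W}\subseteq\mathcal{U}'$ and take $\varepsilon\leq\varepsilon_0$; then $H\big([0,\varepsilon]\X\cl{W}\X[0,1]\big)$ is a closed subset of a relatively compact set, hence compact.

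For the second assertion the plan is to construct $W$ using Remark~\ref{rem:CPP} to reduce \eqref{unperturbed} to the autonomous system \eqref{unp0}.  By that remark, on $\cl{\giu V}$ the time-$T$ maps of the two systems coincide, so $T$-periodic solutions of \eqref{unperturbed} starting in $\cl{\giu V}$ are exactly the fixed points of the autonomous time-$T$ map $\Phi_T$.  The set $F:=\mathrm{Fix}(\Phi_T)\cap\cl{\giu V}$ is closed and splits as $F=E\cup F'$, where $E:=\{(p,q)\in\cl{\giu V}:\mathfrak{g}(p,q)=0\}$ gives the constant-solution part and $F'$ the non-constant part.  The CPP of $V$ forces $F'\subseteq\giu V$ (interior), and since $\mathcal{K}:=\giu{Z_V}\subseteq E$ we have $\mathcal{K}\cap F'=\varnothing$.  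Setting $\rho(p,q):=\dist\big((p,q),\mathcal{K}\big)$, I would look for $r>0$ small such that the level set $\rho^{-1}(r)$ misses $F'$; then $\rho^{-1}\big([0,r)\big)$ is an open neighbourhood of $\mathcal{K}$ in $\giu V$ whose boundary avoids $F'$, and taking $W$ to be the corresponding open $r$-neighbourhood of $Z_V$ in $\wt M\X\wt N$ (shrunk further, if needed, to stay inside the $\mathcal{U}'$ of Part~1) gives a set for which no non-constant $T$-periodic solution of \eqref{unperturbed} can meet $\D\giu W$, i.e.\ the CPP.

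The main obstacle is showing that such an $r$ can actually be chosen, because $F'$ may a priori accumulate at $\mathcal{K}$ (think of a Hopf-type family of closed orbits shrinking to an equilibrium in $\mathcal{K}$), and so $\rho(F')\subseteq(0,\infty)$ could conceivably contain every sufficiently small positive value.  The way around it is a Sard-type/transversality argument, in the spirit of those used in \cite{BiSp13,FS09}: exploit the smooth structure of $\Phi_T-\mathrm{id}$ on $\cl{\giu V}$ and the curve-like nature of the non-equilibrium $T$-periodic orbits of \eqref{unp0} to show that $\rho(F')$ has empty interior in $(0,r_0)$ for some $r_0>0$, and then pick $r\in(0,r_0)\setminus\rho(F')$.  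Verifying this last step, which is really what separates $\mathcal{K}$ from $F'$ at the level of the distance function, is the delicate part of the argument; everything else is a routine packaging of continuous dependence for the $C^1$ system together with the compactness of $Z_V$ and $\cl{\giu V}$.
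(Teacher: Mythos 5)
Your first half is fine and is essentially the paper's argument: each point of $Z_V$ is the initial condition of a constant (hence global) solution of the $\lambda=0$ problem, so $\{0\}\X Z_V\X[0,1]\subseteq\mathcal{D}_H$, and openness of $\mathcal{D}_H$ together with the local compactness of $H$ gives $W$, $\varepsilon$ and the compact image.

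The second half has a genuine gap, and it is exactly the step you flag as delicate: the claim that $\rho(F')$ has empty interior near $0$ is false in general, so no Sard/transversality argument can produce the radius $r$ you need. The constant periodic property of $V$ constrains only $\D\giu V$; it says nothing about small spheres around $\giu{Z_V}$. Concretely, take $M$ a one--point manifold, $N=\R^2$, $\mathfrak f\equiv 0$, and $\mathfrak g$ so that (after the $\avrg a$ rescaling in \eqref{unp0}) the flow is the rigid rotation of period $T$ on the disk of radius $1/2$ and spirals outward, with no periodic orbits, outside it; let $\giu V$ be the ball of radius $2$. Then $Z_V$ is the single equilibrium, $V$ has the constant periodic property (no $T$-periodic orbit meets $\D\giu V$), yet every circle of radius $r\in(0,1/2)$ about the equilibrium is itself a non-constant $T$-periodic orbit, so $\rho(F')\supseteq(0,1/2)$ and no admissible small $r$ exists. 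In other words, a $W$ obtained by shrinking around $Z_V$ cannot work.

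The lemma is nevertheless true because $W$ is not required to be a \emph{small} neighborhood of $Z_V$, and the paper goes the opposite way: it enlarges the core instead of shrinking it. Since $\giu V$ is relatively compact and $\cl V\subseteq\mathcal{D}'$, the attainable set $\Phi_{[0,T]}\big(\cl{\giu V}\big)$ is compact, and the union $\mathcal{O}_T$ of all $T$-periodic orbits of \eqref{unp0} meeting $\cl{\giu V}$, being closed in it, is compact; by Remark \ref{rem:CPP} these are exactly the $T$-periodic orbits of \eqref{unperturbed} meeting $\cl{\giu V}$. Applying the local compactness of $H$ to the compact set obtained by adjoining $\mathcal{O}_T$ to $Z_V$, one gets an open $W\subseteq V$ with $Z_V\subseteq W$, $\mathcal{O}_T\subseteq\giu W$, and the compactness properties of the first half. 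With this $W$, if a $T$-periodic solution of \eqref{unperturbed} meets $\D\giu W\subseteq\cl{\giu V}$, then either its orbit meets $\D\giu V$, and it is constant by the constant periodic property of $V$, or its orbit avoids $\D\giu V$, hence lies in $\giu V$, hence in $\mathcal{O}_T\subseteq\giu W$, contradicting that it touches the boundary of the open set $\giu W$. So the correct mechanism is to swallow all the relevant periodic orbits into $W$ rather than to separate them from a small sphere around $Z_V$; this is also precisely where the additional hypotheses ($\giu V$ relatively compact, $\cl V\subseteq\mathcal{D}'$) are used.
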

\begin{proof}
  One immediately checks that the set $Z_V$ consists of $T$-periodic solutions of 
  \eqref{unperturbed}.  Thus, we have that $Z_V\subseteq\mathcal{D}'$ and the first part of the
  lemma follows from the local compactness of $H$.
 
   Let us now prove the second part of the assertion. Let $\{\Phi_t\}_{t\in\R}$ be the local 
   flow associated to \eqref{unp0} as in Remark \ref{rem:CPP}. 
%   That is, $\Phi\colon U\to\MxN$ is defined on an open subset 
%   $U$ of $\R\X\MxN$, containing $\{0\}\times\MxN$, with the property that for any
%   $(p,q)\in\MxN$ the curve $t\mapsto\Phi_t(p,q)$ is the maximal
%   solution of \eqref{unp0} such that $\Phi_0(p,q)=(p,q)$. Then, given
%   $\tau\in\mathbb{R}$, the domain of $\Phi_\tau$ is the open set
%   consisting of those points $(p,q)\in\MxN$ for which the maximal
%   solution of \eqref{unp0} starting from $(p,q)$ at $t=0$ is defined
%   up to $\tau$. \ev{In particular, this is true for $\tau=T$.} 
  The map $(t,p,q)\mapsto\Phi_t(p,q)$ is continuous and, therefore, the ``attainable set''
  $\mathcal{A}_T:=\Phi_{[0,T]}\big(\cl{\giu{V}}\big)$ is  compact. Thus, the union 
  $\mathcal{O}_T$ of all $T$-periodic orbits of \eqref{unp0} starting from points 
  of $\cl{\giu{V}}$, being closed in $\mathcal{A}_T$, is compact as well. Clearly, 
  since \eqref{unp0} is autonomous, $\mathcal{O}_T$ is actually the set of all
  $T$-periodic orbits of \eqref{unp0} that intersect $\cl{\giu{V}}$.

  Remark \ref{rem:CPP} shows that $\mathcal{O}_T$ consists indeed of all $T$-periodic orbits 
  of \eqref{unperturbed} that intersect $\cl{\giu{V}}$. 
  Let us denote by $K$ the union of $Z_V$ with this set. Clearly $K$
  is contained in $\mathcal{D}'$. The local compactness of $H$ implies
  the existence of an open neighborhood $W\subseteq V$ of $K$ and a
  positive $\varepsilon$ with the property that
  $[0,\varepsilon]\X \cl{W}\X [0,1]\subseteq\mathcal{D}_H$ and
  $H\big([0,\varepsilon]\X \cl{W}\X [0,1]\big)$ is compact.
  The second part of the claim follows now from the fact a $T$-periodic
  solution of \eqref{unperturbed} whose image intersects the boundary $\D \giu{W}$,
  of the set $W$ just constructed, necessarily intersects $\D \giu{V}$ and thus 
  must be constant.
\end{proof}

It is convenient to set
\[
Q_T^\lambda=H(\lambda,\cdot,\cdot,1),
\quad\text{and}\quad 
\wt Q_T^\lambda = H(\lambda,\cdot,\cdot,0).
\]
We will denote the domain of $H(\cdot,\cdot,\cdot,1)$ by the letter $\mathcal{D}$.

The following is the main result of this section (cf.\ \cite{FS09, BiSp13}). It relates 
the fixed point index of $Q_T^\lambda$ for small $\lambda>0$ (see, e.g., \cite{Mi, N} for 
an introduction) with the degree of the tangent vector field $\nu$. Recall that this 
notion, roughly speaking, counts (algebraically) the zeros of a vector field; for an 
exposition of this topic we refer, e.g., to \cite{Mi} or \cite{FuPeSp}.

\begin{theorem}\label{fixind}
  Given $V\subseteq\wt M\X\wt N$ open and such that
  \begin{enumerate}[label=(\roman*)]
  \item $\giu V$ is relatively compact;
  \item There exists $s>0$ such that $[0,s]\X\cl V\subseteq\mathcal{D}$;
  \item $Z_V$ is compact;
  \item \label{itm:4} If $(x,y)$ is a $T$-periodic solution of
    \eqref{unperturbed} whose image intersects $\D\giu V$, then $(x,y)$ is constant.
  \end{enumerate}
  Then there exists $\lambda_*\in(0,s]$ such that, for $\lambda\in(0,\lambda_*)$, 
  $\ind(Q^\lambda_T,V)$ is well defined and
  \[
  \ind(Q^\lambda_T,V)=\sign(\avrg{a})^{\dim N}\deg\big(-\nu,
  \giu{V}\big).
  \]
\end{theorem}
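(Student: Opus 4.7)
The plan is to follow the combined strategy of \cite{Sp03, Sp06} as refined in \cite{BiSp13}: use excision and homotopy invariance of the fixed point index to reduce the delay problem $Q_T^\lambda$ to the no-delay map $\wt Q_T^\lambda$, identify the latter with the time-$T$ Poincar\'e map of a suitable autonomous vector field on $\MxN$, and apply the classical index--degree formula for such translation operators.

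I begin by invoking Lemma \ref{lemma1} to produce an open $W\subseteq V$ that is a neighborhood of $Z_V$, satisfies $[0,\varepsilon]\X\cl W\X[0,1]\subseteq\mathcal{D}_H$ with $H\bigl([0,\varepsilon]\X\cl W\X[0,1]\bigr)$ compact, and enjoys the constant periodic property for \eqref{unperturbed}. A standard compactness/contradiction argument then produces $\lambda_*\in(0,\varepsilon]$ such that, for every $\lambda\in(0,\lambda_*)$ and every $\mu\in[0,1]$, the map $H(\lambda,\cdot,\cdot,\mu)$ has no fixed points on $\D V\cup\D W$ and all of its fixed points in $V$ lie in $W$. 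Indeed, if this failed, a sequence of fixed points along some $\lambda_n\to 0$ would accumulate, by local compactness of $H$, to a $T$-periodic solution of \eqref{unperturbed} whose $[-r,0]$-trace meets $\D V$, $\D W$ or $V\setminus W$; the constant periodic properties of $V$ (hypothesis \ref{itm:4}) and of $W$ (Lemma \ref{lemma1}) would then force this limit to be a constant zero of $\nu$, which lies in $Z_V\subseteq W$, a contradiction. Excision and the homotopy $\mu\mapsto H(\lambda,\cdot,\cdot,\mu)$ thus give $\ind(Q_T^\lambda,V)=\ind(Q_T^\lambda,W)=\ind(\wt Q_T^\lambda,W)$.

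Next, because the equation defining $\wt Q_T^\lambda$ contains no delay, one has the factorization $\wt Q_T^\lambda=\Xi_\lambda\circ e$, with $e(\varphi,\psi):=(\varphi(0),\psi(0))$ and $\Xi_\lambda(p,q)(\theta)$ equal to the value at time $T+\theta$, starting from $(p,q)$, of the non-autonomous ODE $\dot z=\tfrac{a(t)}{\avrg a}G_\lambda(z)$ on $\MxN$, where $G_\lambda(p,q):=\bigl(\lambda w_\mathfrak{f}(p,q),\,\avrg a\,\mathfrak{g}(p,q)\bigr)$. A time-change argument analogous to the one underlying Remark \ref{rem:CPP} (set $\tau(t):=\int_0^t a(s)/\avrg a\,\dif s$, noting that $\tau(T)=T$) shows that the time-$T$ map of this equation equals the time-$T$ flow $\Pi_T^{G_\lambda}$ of the \emph{autonomous} field $G_\lambda$. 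The commutativity property of the fixed point index applied to $\wt Q_T^\lambda=\Xi_\lambda\circ e$, combined with an excision that is valid because for $\lambda$ small and $(p,q)$ near $\nu^{-1}(0)$ the curve $\Xi_\lambda(p,q)$ is uniformly close in $\wt M\X \wt N$ to the constant $(\su p,\su q)$, yields $\ind(\wt Q_T^\lambda,W)=\ind(\Pi_T^{G_\lambda},\giu W)$.

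To conclude, the classical Poincar\'e-translation formula on manifolds (see e.g.\ \cite{FuPeSp}) gives, for $\lambda>0$ sufficiently small, $\ind(\Pi_T^{G_\lambda},\giu W)=\deg(-G_\lambda,\giu W)$. A straight-line homotopy of $G_\lambda$ to $\bigl(w_\mathfrak{f},\,\avrg a\,\mathfrak{g}\bigr)$, followed by factoring $\avrg a$ out of the $N$-component (which multiplies the vector field degree by $\sign(\avrg a)^{\dim N}$), and a final excision from $\giu W$ to $\giu V$ (valid since $\nu^{-1}(0)\cap\giu V\subseteq\giu W$), then yield the required $\sign(\avrg a)^{\dim N}\deg(-\nu,\giu V)$. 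The most delicate step is the reduction $\ind(\wt Q_T^\lambda,W)=\ind(\Pi_T^{G_\lambda},\giu W)$: one must both justify the time-change when $a(t)$ changes sign (so that $\tau$ need not be monotone, though $\tau(T)=T$) and carry out the infinite-to-finite-dimensional passage via the commutativity of the fixed point index without losing track of which fixed points are being counted.
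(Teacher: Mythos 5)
Your first half coincides with the paper's proof: Lemma \ref{lemma1}, the limiting/contradiction argument for small $\lambda$, and excision plus the homotopy $\mu\mapsto H(\lambda,\cdot,\cdot,\mu)$ to obtain $\ind(Q_T^\lambda,V)=\ind(Q_T^\lambda,W)=\ind(\wt Q_T^\lambda,W)$ are exactly Claims~1 and~2 there. One caveat: the constant periodic property only forces the limit to be a constant solution with $\mathfrak{g}(p_0,q_0)=0$; that it is a zero of the \emph{full} field $\nu$ requires integrating the first equation over a period and dividing by $\lambda_n$ (and, in the homotopy case, handling the convex combination with the $\tfrac{a(t)}{\avrg{a}}w_\mathfrak{f}$ term) to get $w_\mathfrak{f}(p_0,q_0)=0$. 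This averaging step is the computational heart of both claims and is nowhere explicit in your "standard compactness/contradiction argument"; you should state it.

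Where you genuinely diverge is the second half. The paper does not reduce $\wt Q_T^\lambda$ to a finite-dimensional translation operator by hand: it quotes Theorem~3.2 of \cite{BiSp13} to get $\ind(\wt Q_T^\lambda,W)=\sign(\avrg{a})^{\dim(\MxN)}\deg(-v_\lambda,\giu W)$, then uses positive homogeneity of the degree and Lemma~1 of \cite{Sp06} to convert to $\sign(\avrg{a})^{\dim N}\deg(-\nu,\giu W)$, and excises to $\giu V$. You instead re-derive the cited result: factorization of $\wt Q_T^\lambda$ through evaluation at $0$, the time change $\tau(t)=\int_0^t a(s)/\avrg{a}\,ds$ as in \cite{Sp03} and Remark \ref{rem:CPP}, commutativity of the fixed point index, and the translation-operator formula of \cite{FuPeSp}; your degree bookkeeping (rescaling the first component, factoring $\avrg{a}$ out of the $N$-block) does produce the correct factor $\sign(\avrg{a})^{\dim N}$, consistent with the paper's $\sign(\avrg{a})^{2\dim M+\dim N}$. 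The cost is that the two steps you yourself flag as delicate are asserted rather than proved, and that is where the real work sits: (a) admissibility of $\Pi_T^{G_\lambda}$ on $\giu W$ is not automatic for small $\lambda$, because the $N$-component of $G_\lambda$ does not shrink with $\lambda$; one must exclude fixed points of $\Pi_T^{G_\lambda}$ on $\D\giu W$, i.e.\ nonconstant $T$-periodic orbits of $G_\lambda$ through $\D\giu W$, by another limiting argument that uses the second part of Lemma \ref{lemma1} (the constant periodic property of $W$, which is why $W$ was built around the set $\mathcal{O}_T$) together with the same divide-by-$\lambda_n$ averaging on the first component; the formula of \cite{FuPeSp} itself needs admissibility, not smallness of $\lambda$. (b) The commutativity property naturally gives the index of $e\circ\Xi_\lambda$ on the set $\{(p,q):\Xi_\lambda(p,q)\in W\}$, not on $\giu W$, so the excision matching the two fixed point sets (and their compactness) must be carried out explicitly. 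If you supply (a) and (b) — in effect reproving \cite[Theorem~3.2]{BiSp13} — or simply cite that theorem as the paper does, your argument is complete.
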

\smallskip

The symbol ``$\ind(Q^\lambda_T,V)$'' in the above formula denotes the fixed point index of
$Q^\lambda_T$ in the open set $V$, whereas ``$\deg\big(-\nu,\giu V\big)$'' denotes the
degree of the tangent vector field $-\nu$ in the open subset $\giu V$ of $M\X N$.

\begin{proof}[Proof of Theorem~\ref{fixind}]
  Let $W$ and $\varepsilon$ be as in Lemma~\ref{lemma1}. Consider the sets
  \begin{gather*}
    \mathcal{S}=\big\{(\lambda,\varphi,\psi)\in[0,\varepsilon]\X\cl W :
    H(\lambda,\varphi,\psi,1)=(\varphi,\psi)\big\},\\
    \mathcal{S}_0=\mathcal{S}\cap \big(\{0\}\X\wt M\X\wt N\big).
  \end{gather*}
  Clearly, $\mathcal{S}$ is compact being a closed subset of the compact set 
  $[0,\varepsilon]\X H\big([0,\varepsilon]\X\cl{W}\X [0,1]\big)$. Thus $\mathcal{S}_0$ is 
  compact as well. Using the definition of $Q^\lambda_T$, we will prove the following fact:

  \smallskip
  \noindent\textbf{Claim~1.} There exists $\lambda_0\in\left(0,\min\{\varepsilon,s\}\right]$ 
  such that if $(\varphi,\psi)\in V$ is a fixed point of $Q^\lambda_T$ with 
  $\lambda\in (0,\lambda_0]$ then $(\varphi,\psi)\in W$. That is, $Q^\lambda_T$ has no fixed 
  points in $\cl V\setminus W$ for $\lambda\in (0,\lambda_0]$.

  To prove this claim we proceed by contradiction. If the claim is false there exist sequences 
  $\{\lambda_n\}\subseteq (0,\lambda_0]$, and 
  $\big\{(\varphi_{n},\psi_{n})\big\}\subseteq\cl V\setminus W$, with $\lambda_n\to 0$ and 
  $(\lambda_n,\varphi_{n},\psi_{n})\in\mathcal{S}$.  
  By the compactness of $\mathcal{S}_0\cap(\cl V\setminus W)$ we can assume that
  $(\varphi_{n},\psi_{n})\to(\varphi_0,\psi_0)\in \mathcal{S}_0\cap(\cl V\setminus W)$. The
  continuous dependence on data shows that the solution of \eqref{unperturbed} with initial 
  data $(\varphi_0,\psi_0)$ is $T$-periodic. Assumption \ref{itm:4} shows that there exists  
  $p_0\in M$ and $q_0\in N$ such that $(\varphi_0,\psi_0)=(\su p_0,\su q_0)$. Clearly, one
  has $\mathfrak{g}(p_0,q_0)=0$.  Let $(x_n,y_n)$ be the unique maximal
  solution of
  \begin{equation*}
    \left\{
    \def\arraystretch{1.3}
      \begin{array}{lr}
        \!\begin{aligned}
          & \dot x(t)=\lambda_n \mathfrak{f}\big(t,x(t),y(t),x(t-r),y(t-r)\big),\\
          & \dot y(t)=a(t)\mathfrak{g}\big(x(t),y(t)\big)+
             \lambda_n \mathfrak{h}\big(t,x(t),y(t),x(t-r),y(t-r)\big),
         \end{aligned} & t>0,\\    
         x(t)=\varphi_{n}(t),\quad y(t)=\psi_{n}(t),  & t\in[-r,0].
      \end{array}
    \right.
  \end{equation*}
  Then,
  \begin{equation*}
    0=x_n(T)-x_n(0)=\lambda_n \int_0^T
    \mathfrak{f}\big(t,x_n(t),y_n(t),x_n(t-r),y_n(t-r)\big)\dif{t}.
  \end{equation*}
  So that, in particular,
  \[
  0=\int_0^T
  \mathfrak{f}\big(t,x_n(t),y_n(t),x_n(t-r),y_n(t-r)\big)\dif{t}
  \]
  and, passing to the limit, we get
  \[
  0=\int_0^T
  \mathfrak{f}(t,p_0,q_0,p_0,q_0)\dif{t}=w_\mathfrak{f}(p_0,q_0).
  \]
  Hence, $\nu(p_0,q_0)=\big(w_\mathfrak{f}(p_0,q_0), \mathfrak{g}(p_0,q_0)\big)=0$. This 
  contradicts the choice of $W$ and completes the proof of Claim~1.  \smallskip

  Claim 1 shows that, for $\lambda\in (0,\lambda_0]$, the set of the fixed points of 
  $Q^\lambda_T$ that lie in $V$ is, in fact, contained in $W$. Hence, by the compactness 
  of $\mathcal{S}$, it is compact. As a consequence, $\ind(Q^\lambda_T,V)$ and
  $\ind(Q^\lambda_T,W)$ are well-defined and, by the excision
  property,
  \begin{equation}\label{iddegexc}
    \ind(Q^\lambda_T,V)=\ind(Q^\lambda_T,W),\quad\text{for $\lambda\in (0,\lambda_0]$}.
  \end{equation}

  In fact, when $\lambda$ is sufficiently small, something more can be
  obtained:

  \smallskip
  \noindent\textbf{Claim~2.} There exists $\lambda_*\in(0,\lambda_0]$,
  such that the homotopy $H_{\lambda}:\cl W\X[0,1]\to\wt M\X\wt N$
  given by
  $H_{\lambda}(\varphi,\psi,\mu)=H(\lambda,\varphi,\psi,\mu)$, is
  admissible for each $\lambda\in(0,\lambda_*]$.

  To prove the claim we ought to show that for each $\lambda\in(0,\lambda_*]$, $\lambda_*>0$ 
  sufficiently small, the set of fixed points
  \[
  \mathcal{F}_{\lambda}=\big\{(\varphi,\psi)\in\cl W:
  H(\lambda,\varphi,\psi,\mu)=(\varphi,\psi),\, \text{for some
    $\mu\in[0,1]$}\big\},
  \]
  which is compact being a closed subset of $H\big([0,\varepsilon]\X\cl{W}\X [0,1]\big)$, 
  is contained in $W$. Suppose by contradiction that this is not the case, that is, that 
  such a choice of $\lambda_*$ cannot be done. Then there are sequences 
  $\{\lambda_n\}\subseteq (0,\lambda_0]$, $\{\mu_n\}\subseteq [0,1]$ and 
  $\big\{(\varphi_n,\psi_n)\big\}\subseteq\partial W$ with $\lambda_n\to 0$ and
  \begin{equation}\label{fixptH}
    H(\lambda_n,\varphi_n,\psi_n,\mu_n)=(\varphi_n,\psi_n).
  \end{equation}
  As in the proof of Claim 1, by the compactness of
  $H\big([0,\varepsilon]\X \cl{W}\X [0,1]\big)$ we can assume that
  $(\varphi_n,\psi_n)\to(\varphi_0,\psi_0)\in\partial W$. The continuous dependence on data
  shows that the solution of \eqref{unperturbed} with initial data $(\varphi_0,\psi_0)$ is 
  $T$-periodic. Assumption \ref{itm:4} shows that there exists $p_0\in M$ and $q_0\in N$ such 
  that $(\varphi_0,\psi_0)=(\su p_0,\su q_0)$.  Clearly, we get $\mathfrak{g}(p_0, q_0)=0$.  
  From \eqref{fixptH} it follows that if $(x_n,y_n)$ is the solution of
  \[
  \left\{
    \def\arraystretch{1.3}
    \begin{array}{lr}
      \!\begin{aligned}
         &\! \dot x(t)=\lambda_n \big[\mu_n \mathfrak{f}\big(t,x(t),y(t),x(t-r),y(t-r)\big)\\
              &\hspace{4cm} 
                         +(1-\mu_n)\tfrac{a(t)}{\avrg{a}}w_\mathfrak{f}\big(x(t),y(t)\big)\big],\\
         &\! \dot y(t)=a(t)\mathfrak{g}\big(x(t),y(t)\big)\\
             &\hspace{3cm}  +\lambda_n\mu_n \mathfrak{h}\big(t,x(t),y(t),x(t-r),y(t-r)\big),
       \end{aligned} & t>0,\\
      \! x(t)=\varphi_n(t),\quad y(t)=\psi_n(t), & t\in[-r,0].
    \end{array}
  \right.
  \]
  Then,
  \begin{multline*}
    0=x_n(T)-x_n(0)=\lambda_n\int_0^T \mu_n
    \mathfrak{f}\big(t,x_n(t),y_n(t),x_n(t-r),y_n(t-r)\big)\dif{t}\\
    +\lambda_n\int_0^T(1-\mu_n)
    \frac{a(t)}{\avrg{a}}w_\mathfrak{f}\big(x_n(t),y_n(t)\big)\dif{t}.
  \end{multline*}
  So that
  \begin{multline*}
    0= \mu_n\int_0^T \mathfrak{f}\big(t,x_n(t),y_n(t),x_n(t-r),y_n(t-r)\big)\dif{t}\\
    +(1-\mu_n)\int_0^T\frac{a(t)}{\avrg{a}}w_\mathfrak{f}\big(x_n(t),y_n(t)\big)\dif{t}.
  \end{multline*}
  Passing to the limit we get
  \begin{multline*}
    0 = \mu_0\int_0^T
    \mathfrak{f}\big(t,p_0,q_0,p_0,q_0\big)\dif{t}
    +(1-\mu_0)\int_0^T\frac{a(t)}{\avrg{a}}w_\mathfrak{f}
    \big(p_0,q_0\big)\dif{t}=\\
     =\mu_0 w_\mathfrak{f}\big(p_0,q_0\big) + (1 - \mu_0 )
      w_\mathfrak{f}\big(p_0,q_0\big) =w_\mathfrak{f}(p_0,q_0), 
  \end{multline*}
  which contradicts the choice of $W$ and proves Claim~2.

  \smallskip 
  Claim 2, along with the homotopy invariance property, imply that for $\lambda\in(0,\lambda_*]$
  \begin{equation}\label{iddeghom}
    \ind(Q^\lambda_T,W)=\ind\big(H(\lambda,\cdot,\cdot,1),W\big)
    =\ind\big(H(\lambda,\cdot,\cdot,0),W\big) =\ind(\wt Q_T^\lambda,W).
  \end{equation}
  
  Consider the tangent vector field $v_\lambda$ on $M\X N$ given by
  \begin{equation*}
    v_\lambda(p,q):=\left(\frac{\lambda}{\avrg{a}}w_\mathfrak{f}(p,q),
      \lambda \mathfrak{g}(p,q)\right).
  \end{equation*}

  Theorem 3.2 of \cite{BiSp13} imply that, for each fixed $\lambda\in(0,\lambda_*]$
  \begin{equation}\label{iddegBS}
    \ind(\wt Q_T^\lambda,W)=\sign(\avrg{a})^{\dim(\MxN)}\deg(-v_\lambda,\giu W).
  \end{equation}
  Since $\lambda>0$, a well known property of the degree yields
  \begin{equation}\label{iddev1}
    \deg(-v_\lambda, \giu W)=\deg(-v_1, \giu W).
  \end{equation} 
  Lemma~1 of \cite{Sp06} shows that
  \begin{equation} \label{degdeg-v} \deg(-v_1, \giu
    W)=\sign(\avrg{a})^{\dim M}\deg (-\nu,\giu W),
  \end{equation}
  hence, by equalities \eqref{iddeghom}---\eqref{degdeg-v}, %\eqref{iddev1} and \eqref{iddegBS},
  taking into account that $\dim(\MxN)=\dim M+\dim N$, we get
  \begin{equation}\label{iddeghBSS}
    \begin{split}
      \ind(Q^\lambda_T,W) &= \ind(\wt Q_T^\lambda,W) 
                           = \sign(\avrg{a})^{\dim(\MxN)}\deg(-v_\lambda,\giu W)\\
      &=\sign(\avrg{a})^{\dim(\MxN)}\deg(-v_1,\giu W)\\
      &=\sign(\avrg{a})^{\dim(\MxN)}\sign(\avrg{a})^{\dim(M)}\deg(-\nu,\giu W)\\
%      &=\sign(\avrg{a})^{\dim M+\dim(\MxN)}\deg(-\nu,\giu W)\\
      &=\sign(\avrg{a})^{2\dim M+\dim N}\deg(-\nu,\giu W)\\
      &=\sign(\avrg{a})^{\dim N}\deg(-\nu,\giu W).
    \end{split}
  \end{equation}
  Finally, by \eqref{iddegexc}, \eqref{iddeghBSS} and the excision property of the degree, we 
  get
  \begin{align*}
    \ind(Q^\lambda_T,V) & =\ind(Q^\lambda_T,W) \\
    & =\sign(\avrg{a})^{\dim N}\deg(-\nu,\giu W)\\
    & =\sign(\avrg{a})^{\dim N}\deg(-\nu,\giu V),
  \end{align*}
  which proves the assertion.
\end{proof}

\section{Branches of $T$-periodic solutions} \label{sec:branches}

Let $T>0$ be given, by $C_T (\R^d)$ we mean the Banach space of all
the continuous $T$-periodic functions $\zeta\colon\R\to\R^d$ whereas
$C_T (X)$ denotes the metric subspace of $C_T (\R^d)$ consisting of
all those $\zeta\in C_T (\R^d)$ that take values in $X$. It is not
difficult to prove that $C_T (X)$ is complete if and only if $X$ is
closed in $\R^d$.

It is also convenient to introduce the following notation: Given $(p,q)$ 
in $\MxN$, let $(\h p,\h q)\in\CTMN=C_T(\R, M\X N)$ be the constant maps 
$\big(\h p(t),\h q(t)\big)\equiv (p,q)$, $t\in\R$.

% 
% As in \cite{FS2}, we introduce some natural identifications. That is,
% we will regard every space as its image in the following diagram of
% closed embeddings:
% \begin{equation}\label{dinc}
%   \begin{CD}
%     [0,\infty)\times \MxN @>>> [0,\infty)\times \CTMN\\
%     @AAA  @AAA  \\
%     \MxN @>>> \CTMN
%   \end{CD}
% \end{equation}
% where the horizontal arrows are defined by regarding any $(p,q)$ in
% $\MxN$ as the pair of constant maps $\big(\hat p(t),\hat q(t)\big)
% \equiv (p,q)$ in $\CTMN=C_T(\R, M\X N)$, and the two vertical arrows
% are the natural identifications $(p,q)\mapsto (0,p,q)$ and
% $(x,y)\mapsto (0,x,y)$.
% 
% According to these embeddings, if $\Omega $ is an open subset of
% $[0,\infty)\times \CTMN$, by $\Omega \cap (\MxN)$ we mean the open
% subset of $\MxN$ given by all $(p,q)\in \MxN$ such that the triple
% $(0,p,q)$ belongs to $\Omega$. If $U$ is an open subset of
% $[0,\infty)\times \MxN$, then $U\cap (\MxN)$ represents the open set
% $\{(p,q)\in \MxN : (0,p,q) \in U\}$.
% 
% Let $\nu$ be the vector field defined in the previous section. Observe
% that any $(p,q)\in\nu^{-1}(0)$ can be seen --in the sense specified
% above-- as a $T$-periodic solution of the unperturbed equation
% \eqref{main}.

We are now in a position to state our result concerning the
``branches'' of $T$-triples of \eqref{main}. Its proof follows closely
the one of Th.\ 5.1 in \cite{FS09} (see also \cite{FuPe6, BiSp13}), for
this reason we only provide a sketch for the sake of completeness.

\begin{theorem}\label{tunoRit}
  Let $\Omega$ be an open subset of $[0,\infty)\times \CTMN$, and let 
  $\Omega_{\MxN}:=\left\{(p,q)\in\MxN:(0,\h p,\h q)\in\Omega\right\}$.
  Assume that $\deg\big(\nu,\Omega_{\MxN}\big)$ is well-defined and 
  nonzero. Then there exists a connected set $\Gamma$ of nontrivial 
  $T$-triples for \eqref{main} in $\Omega$ whose closure in
  $[0,\infty)\times\CTMN$ meets $\nu^{-1}(0)\cap\Omega_{\MxN}$ and is 
  not contained in any compact subset of $\Omega$. In particular, if
  $\MxN$ is closed in $\R^{k+s}$ and $\Omega =[0,\infty )\times
  \CTMN$, then $\Gamma$ is unbounded.
\end{theorem}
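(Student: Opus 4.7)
My plan is to cast the search for $T$-triples as a parametrized fixed point problem in $\t M\X\t N$, to compute the relevant fixed point index via Theorem~\ref{fixind}, and then to apply a global continuation principle of Furi--Pera type. First, since $T\geq r$, the restriction $R\colon \CTMN\to\t M\X\t N$, $R(x,y):=(x,y)|_{[-r,0]}$, is a homeomorphism onto its image, with continuous inverse given by the $T$-periodic extension, which I shall call $E$. Under this identification, the set of $T$-triples of \eqref{main} in $\Omega$ is in bijection with the set of fixed points of $Q_T^\lambda$ lying in the open set
\[
\t\Omega := \bigl\{(\lambda,\varphi,\psi)\in[0,\infty)\X\t M\X\t N : \bigl(\lambda, E(\varphi,\psi)\bigr)\in\Omega\bigr\},
\]
and the trivial $T$-triples of interest correspond to the set $K^{\#}:=\{(\su p,\su q) : (p,q)\in K\}$ inside $\t\Omega_0:=\{(\varphi,\psi) : (0,\varphi,\psi)\in\t\Omega\}$, where $K:=\nu^{-1}(0)\cap\Omega_{\MxN}$ is compact because $\deg(\nu,\Omega_{\MxN})$ is defined.

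Second, I would pick an open neighborhood $V\subseteq\t\Omega_0$ of $K^{\#}$ satisfying the four hypotheses of Theorem~\ref{fixind}. Invoking that theorem together with the excision property of the degree would then provide $\lambda_*>0$ such that, for each $\lambda\in(0,\lambda_*)$,
\[
\ind(Q_T^\lambda, V)=\sign(\avrg{a})^{\dim N}\deg(-\nu,\giu V) = \pm\,\deg(\nu,\Omega_{\MxN})\neq 0.
\]

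With a nonzero index at hand, I would apply the general connectedness lemma for parametrized families of locally compact maps employed in the proof of Th.~5.1 of \cite{FS09} (compare also \cite{BiSp13}). The nonvanishing of the index forces the existence of a connected set of nontrivial fixed points of $Q_T^{(\cdot)}$ whose closure in $[0,\infty)\X\t M\X\t N$ meets $K^{\#}$ and is not contained in any compact subset of $\t\Omega$. Transporting this set through the homeomorphism induced by $E$ yields the desired branch $\Gamma$ in $\Omega$. The final assertion, when $\MxN$ is closed in $\R^{k+s}$ and $\Omega=[0,\infty)\X\CTMN$, follows because $\CTMN$ is then complete, so a closed subset of $[0,\infty)\X\CTMN$ not contained in any compact subset must be unbounded.

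The principal technical obstacle is ensuring that $V$ can be chosen to have the constant periodic property for \eqref{unperturbed}. Since non-constant $T$-periodic orbits of the autonomous equation \eqref{unp0} could a priori accumulate at points of $K$, CPP is not automatic merely from smallness of $V$. I would address this by leveraging Remark~\ref{rem:CPP}, which identifies the $T$-periodic orbits of \eqref{unperturbed} with those of \eqref{unp0}, together with a careful selection of $V$ in the spirit of Lemma~\ref{lemma1}.
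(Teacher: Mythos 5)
Your overall strategy (compute the index via Theorem~\ref{fixind}, apply the Furi--Pera connection argument, then transport the branch to the function space) is the same as the paper's, but three steps as you wrote them do not hold up. First, the claimed homeomorphism $R\colon\CTMN\to\t M\X\t N$ with inverse the ``$T$-periodic extension'' $E$ is false whenever $r<T$: the restriction of a $T$-periodic function to $[-r,0]$ does not determine it on $[0,T-r]$, so $R$ is not injective and $E$ is not well defined; consequently your set $\t\Omega$, defined through $E$, is meaningless. The paper avoids this by identifying only the set $X$ of $T$-periodic \emph{triples} with the set $S$ of \emph{starting triples} via $\Pi(\lambda,x,y)=\big(\lambda,x|_{[-r,0]},y|_{[-r,0]}\big)$; injectivity of $\Pi$ and continuity of $\Pi^{-1}$ come from uniqueness and continuous dependence for \eqref{due.uno}, and $\Omega$ is transported by taking $S_\Omega=\Pi(X\cap\Omega)$, choosing an open $W$ with $S\cap W=S_\Omega$, and using excision to get $\deg\big(\nu,\giu{W^0}\big)=\deg\big(\nu,\Omega_{\MxN}\big)$. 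Second, this identification (and indeed the very definition of $Q_T^\lambda$ as a single-valued map) requires uniqueness of solutions, i.e.\ $C^1$ data, while Theorem~\ref{tunoRit} assumes only continuity of $a,\mathfrak f,\mathfrak g,\mathfrak h$; your proof has no analogue of the paper's Step~3, the approximation argument that removes the regularity hypothesis, so as written it proves a strictly weaker statement.

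Third, your justification of the final assertion is incorrect: in an infinite-dimensional complete space a closed set not contained in any compact set need not be unbounded (the closed unit ball of $C_T(\R^d)$ is a counterexample). The correct argument must use that the elements of $\cl\Gamma$ are $T$-triples: if $\Gamma$ were bounded and $\MxN$ is closed, the solutions would take values in a compact subset of $\MxN$, their derivatives would be uniformly bounded, and Ascoli--Arzel\`a would make $\cl\Gamma$ a compact subset of $\Omega=[0,\infty)\X\CTMN$, contradicting the first part of the theorem. By contrast, your deferral of the constant-periodic-property issue is acceptable at the level of a sketch: the paper handles it exactly by running the proof of Proposition~4.1 of \cite{FS09} with Theorem~\ref{fixind} in place of \cite[Th.~3.2]{FS09}, the second part of Lemma~\ref{lemma1} having been prepared for that purpose.
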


The proof of this theorem is based on the following global connection
result (see \cite{FuPe6}), which will also be needed later.

\begin{lemma}\label{L.3.1}
  Let $Y$ be a locally compact metric space and let $Z$ be a compact
  subset of $Y$.  Assume that any compact subset of $Y$ containing $Z$
  has nonempty boundary. Then $Y\setminus Z$ contains a connected set
  whose closure (in $Y$) intersects $Z$ and is not compact.
\end{lemma}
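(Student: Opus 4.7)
I argue by contradiction, assuming that every connected set $C\subseteq Y\setminus Z$ whose closure in $Y$ intersects $Z$ has compact closure. The plan is to build a compact subset of $Y$ containing $Z$ with empty boundary, contradicting the hypothesis.

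First, reduce to a $\sigma$-compact setting. Using local compactness of $Y$ and compactness of $Z$, inductively construct compacta $K_n$ with $Z\subseteq\operatorname{int}K_1$ and $K_n\subseteq\operatorname{int}K_{n+1}$; put $V:=\bigcup_n\operatorname{int}K_n$. Since $V$ is open in $Y$, for every compact $K\subseteq V$ the boundary of $K$ in $V$ coincides with the boundary in $Y$, so the hypothesis passes to $V$; moreover any connected subset of $V\setminus Z$ whose closure in $V$ is non-compact has non-compact closure in $Y$ as well. Hence it suffices to work inside $V$.

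For each $n$, apply Whyburn's crossing lemma to the compact metric space $K_n$ with the disjoint closed subsets $Z$ and $\partial_V K_n$. If no joining continuum existed, Whyburn would produce a partition $K_n=A_n\sqcup B_n$ into disjoint closed sets with $Z\subseteq A_n$ and $\partial K_n\subseteq B_n$; then $A_n\subseteq\operatorname{int}K_n$ would be a compact clopen subset of $V$ containing $Z$, contradicting the hypothesis. So there is a continuum $\Gamma_n\subseteq K_n$ meeting $Z$ and $\partial K_n$. Pick $z_n\in\Gamma_n\cap Z$ with $z_n\to z_0\in Z$ along a subsequence. For $m\le n$, let $C_n^m$ be the component of $z_n$ in $\Gamma_n\cap K_m$; by boundary bumping inside the continuum $\Gamma_n$, $C_n^m$ meets $\partial K_m$. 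A diagonal extraction using compactness of the hyperspace of sub-continua of each $K_m$ in the Hausdorff metric then produces nested continua $C^m\subseteq K_m$ with $z_0\in C^m$, $C^m\cap\partial K_m\ne\emptyset$, and $C^m\subseteq C^{m+1}$.

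In the (metrizable) one-point compactification $V^*:=V\cup\{\infty\}$, the set $E^\infty:=\overline{\bigcup_m C^m}^{V^*}$ is a continuum containing $z_0$ and $\infty$ (the latter because the $C^m$'s exit every $K_N$ and so escape every compact subset of $V$). Collapse $E^\infty\cap Z$ to a single point to form the continuum quotient $\tilde E$, and pick an irreducible sub-continuum $\tilde K\subseteq\tilde E$ between the collapsed point $[Z]$ and $[\infty]$ (existence by Zorn on nested sub-continua). Irreducibility forces $\tilde K\setminus\{[Z],[\infty]\}$ to be connected and its closure in $\tilde K$ to meet both endpoints: any separation of $\tilde K\setminus\{[Z],[\infty]\}$, or a closure meeting only one endpoint, would yield a proper sub-continuum of $\tilde K$ containing both $[Z]$ and $[\infty]$. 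Since the quotient map is a homeomorphism off the collapse set, $\tilde K\setminus\{[Z],[\infty]\}$ lifts to a connected subset $F\subseteq V\setminus Z$ whose closure in $V^*$ contains both a point of $Z$ and $\infty$; equivalently, $\overline{F}^V$ meets $Z$ and is non-compact, contradicting the contradiction hypothesis. The main technical hurdle is precisely this last step, namely extracting from the $Z$-meeting connected set $\bigcup_m C^m$ a genuine connected subset of $V\setminus Z$ with the right closure properties; the irreducible sub-continuum/quotient trick is what makes the extraction succeed cleanly.
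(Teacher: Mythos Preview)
The paper does not prove this lemma at all; it is quoted from Furi--Pera \cite{FuPe6} and used as a black box in the proof of Theorem~\ref{tunoRit}. So there is no ``paper's proof'' to compare against, and your attempt stands as an independent argument for a cited result.

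Your overall strategy (pass to a compactification, produce a continuum joining $Z$ to the point at infinity, extract an irreducible subcontinuum, and lift the complement of the two endpoints) is sound, but the reduction to the $\sigma$-compact open set $V$ contains a genuine gap. You assert that ``any connected subset of $V\setminus Z$ whose closure in $V$ is non-compact has non-compact closure in $Y$ as well.'' This is false in general: for $V$ open in $Y$ one has $\overline{F}^{\,V}=\overline{F}^{\,Y}\cap V$, and the intersection of a compact set with an open set need not be compact. Concretely, $\overline{F}^{\,Y}$ may well be compact while acquiring limit points in $Y\setminus V$, so that $\overline{F}^{\,V}$ is a non-closed, hence non-compact, subset of it. Thus producing the desired connected set inside $V$ does not immediately yield one in $Y$, and the reduction is not justified as written. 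The cleanest repair is to drop the reduction entirely and run your irreducibility argument directly in the (possibly non-metrizable) compact Hausdorff space obtained from the Alexandroff compactification $Y\cup\{\infty\}$ by collapsing $Z$; components and quasi-components still coincide there, Zorn still produces an irreducible subcontinuum, and the Whyburn/hyperspace machinery (which is what forced metrizability) becomes unnecessary.

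A smaller point: your claim that $\tilde K\setminus\{[Z],[\infty]\}$ is connected for an irreducible continuum $\tilde K$ is correct, but the one-line justification (``any separation \dots would yield a proper sub-continuum containing both'') hides a nontrivial step. If $\tilde K\setminus\{a,b\}=P\sqcup Q$ is a separation, one first checks $\overline P\cap\overline Q=\{a,b\}$; it does \emph{not} follow that $\overline P$ or $\overline Q$ is itself connected. What one can show (using that components equal quasi-components in compact Hausdorff spaces) is that if neither $\overline P$ nor $\overline Q$ had a component containing both $a$ and $b$, then separating clopen pieces from each would combine into a disconnection of $\tilde K$. That component is then the proper subcontinuum contradicting irreducibility. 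You should spell this out.
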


We are now ready to sketch the proof of the theorem.

\begin{proof}[Sketch of the proof of Theorem \ref{tunoRit}]
  This proof can be roughly divided into three steps:\smallskip

\noindent\textbf{Step 1.} We assume first that the maps $a$, $\mathfrak{f}$, $\mathfrak{g}$
and $\mathfrak{h}$ are $C^1$, so that uniqueness of solutions holds
for \eqref{main}. Consider the following notion:

\noindent
A triple $(\lambda,\varphi,\psi)\in [0,\infty)\X\t M\X\t N$ is said to
be a \emph{starting triple} for \eqref{main} if the following initial
value problem has a $T$-periodic solution:
\begin{equation}\label{due.uno}
  \left\{
    \begin{array}{lr}
      \begin{aligned}
        &\dot x(t) = \lambda\mathfrak{f}\big(t,x(t),y(t),x(t-r),y(t-r)\big)\\
        &\dot y(t) = a(t)\mathfrak{g}\big(x(t),y(t)\big)
        +\lambda\mathfrak{h}\big(t,x(t),y(t),x(t-r),y(t-r)\big)
      \end{aligned} &  t>0,\\[5mm]
      \begin{aligned}
        &x(t)=\varphi(t),\\
        &x(t)=\psi(t)
      \end{aligned}
      & t\in [-r,0].
    \end{array}
  \right.
\end{equation}
A triple of the type $(0,\su p, \su q)$ with $g(p,q)=0$ is clearly a
starting triple and will be called a \emph{trivial starting
  triple}. The set of all starting triples for \eqref{main} will be
denoted by $S$. By known continuous dependence properties of delay
differential equations the set $\V\subseteq[0,\infty)\X\t M\X\t N$ of
all triples $(\lambda,\varphi,\psi)$ such that the unique solution of
\eqref{due.uno} is defined at least up to $T$ is open (compare it to
the set $\mathcal{D}$ defined in section \ref{secPoiType}). Clearly
$\V$ contains the set $S$ of all starting pairs for \eqref{main}.

Given an open set $W$ of $[0,\infty)\X\t M\X\t N$, let 
\[
\giu{W^0}:=\giu{\left(W\cap(\{0\}\X\t M\X\t N)\right)}
          =\left\{(p,q)\in M\X N: (0,\su p,\su q)\in W\right\}.
\]
Our first step consists of proving that, if $\deg\big(\nu,\giu{W^0}\big)$ 
is well-defined and nonzero, then there exists in $S\cap W$ a connected
set $\mathcal{G}$ of nontrivial starting triples whose closure in
$S\cap W$ meets $\big\{(0,\su p,\su q)\in W\cap\V:g(p,q)=0\big\}$ and is
not compact.

The proof of this fact follows closely the one of Proposition 4.1 in
\cite{FS09} using Theorem \ref{fixind} in place of \cite[Th.\
3.2]{FS09}. Loosely speaking, this proof uses the properties of the
fixed point index and of the degree of a tangent vector field to
obtain a contradiction with Lemma \ref{L.3.1}. (Compare also
\cite[Th.\ 4.1]{BiSp13}.)\smallskip

\noindent\textbf{Step 2.} As in Step 1 we assume that the maps $a$, $\mathfrak{f}$, 
$\mathfrak{g}$ and $\mathfrak{h}$ are $C^1$.
Denote by $X$ the set of $T$-periodic triples of \eqref{main} and by
$S$ the set of starting triples of the same equation, as above. Define
the map $\Pi:X\to S$ by
\[
\Pi(\lambda,x,y)=\big(\lambda,x|_{[-r,0]},y|_{[-r,0]}\big)
\]
and observe that $\Pi$ is continuous, onto and, since $\mathfrak{f}$,
$\mathfrak{g}$ and $\mathfrak{h}$ are smooth, it is also one to
one. Furthermore, by the continuous dependence on data, $\Pi^{-1}:S\to
X$ is continuous as well. Take
\[
S_\Omega=\Big\{ (\lambda ,\varphi,\psi)\in S:\mbox{the solution of
    \eqref{due.uno} is contained in $\Omega$} \Big\} ,
\]
so that $X\cap\Omega$ and $S_\Omega$ correspond under the
homeomorphism $\Pi:X\to S$. Thus, $S_\Omega$ is an open subset of $S$
and, consequently, we can find an open subset $W$ of $[0,\infty)\X\t
M\X\t N$ such that $S\cap W=S_\Omega$. This implies, as in \cite[Th.\
5.1]{FS09}, that
\[
\big\{(p,q)\in \giu{W^0}:g(p,q)=0\big\}
     =\big\{(p,q)\in\Omega_{\MxN}:g(p,q)=0\big\}.
\]
The excision property of the degree of tangent vector fields yields
\[
\deg\big(g,\giu{W^0}\big)=\deg\big(g,\Omega_{\MxN})\big)\neq 0.
\]
By Step 1 we deduce the existence of a connected set
\[
\Sigma\subseteq (S\cap W)\setminus\big\{(0,\su p,\su q)\in
W:g(p,q)=0\big\}
\]
whose closure in $S\cap W$ meets $\big\{(0,\su p,\su q)\in
W:g(p,q)=0\big\}$ and is not compact. Clearly, $\Gamma=\Pi^{-1}(\Sigma)$
satisfies the assertion.
\smallskip

\noindent\textbf{Step 3.} We now only need to remove the $C^1$-regularity assumption
on the maps $a$, $\mathfrak{f}$, $\mathfrak{g}$ and $\mathfrak{h}$ replacing it with
continuity. This is done by an approximation procedure that follows closely the one 
used in \cite[Th.\ 5.1]{FS09}. For this reason we skip the details.
\end{proof}

\begin{remark}\label{genRit}
  One can easily check that Theorem \ref{tunoRit} implies both
  \cite[Lemma 4.5]{BeCaFuPe} and \cite[Lemma 3.5]{BeCaFuPe2}, albeit
  in the less general case of boundaryless manifolds, which are valid
  for a single differential equation of the form
  \begin{equation*}%\label{single1}
    \dot x(t)=\lambda \mathfrak{f}\big(t,x(t),x(t-r)\big),
  \end{equation*}
  where $\mathfrak{f}\colon\R\X M\X M\to\R^k$ is tangent to $M$. At
  the same time, Theorem \ref{tunoRit} extends \cite[Thm.\ 5.1]{FS09}
  (see also Th.\ 4.1 in \cite{BiSp13}) that applies to the
  differential equations of the following type:
  \begin{equation*}%\label{single2}
    \dot y(t)=a(t)\mathfrak{g}(y)+\lambda \mathfrak{h}\big(t,y(t),y(t-r)\big).
  \end{equation*}
  where $\mathfrak{g}\colon M\to\R^k$ and $\mathfrak{h}\colon\R\X
  M\X M\to\R^k$ are tangent to $M$.
\end{remark}

\section{An application: A sunflower-like equation} \label{sec:sunflower-like} In
recent years there has been a growing interest around the dynamical
behavior of delay differential equations and in their possible use in
modeling biological and ecological systems. In particular, a certain
amount of research has been dedicated to the sunflower equation, i.e.
\begin{equation}\label{eq:sunflower}
  \ddot y(t) = -\frac{\alpha }{r} \dot y (t) - \frac{ \beta}{r} \sin \big(y(t-r)\big),
\end{equation}
where $\alpha$ and $\beta$ are experimental parameters and $r>0$ is
the finite time delay (see \cite{Girasole}). Somolinos in \cite{Somolinos} showed the
existence of periodic solutions to \eqref{eq:sunflower} for a certain
range of values for the involved parameters $\alpha$, $\beta$ and
$r$. This existence result covers both the cases of small and
large amplitude limit cycles generated by Hopf bifurcation. More recently,
Liu and Kalm\'ar-Nagy \cite{Liu} computed limit cycle amplitudes
and frequencies for \eqref{eq:sunflower}.
Other meaningful results related to this equation can be found in
\cite{DelayDiffEq, Casal-Freed, McD}. 

Here, we are concerned with a parametrized differential equation which arise quite 
naturally from \eqref{eq:sunflower}: It is obtained by assuming that the coefficient 
$-\alpha/r$ of $\dot y(t)$ is actually a real valued function $a\colon\R\to\R$ and 
setting $\lambda =-\beta/r$ as the coefficient of the second term in the left-hand 
side of \eqref{eq:sunflower}.
The parametrized equation under consideration reads as follows:
\begin{equation} \label{eq:sunflower-like} \ddot y(t) = a(t)\dot{y}
  (t) + \lambda \phi\big(y(t), y(t-r)\big),\,\, \lambda \geq 0,
\end{equation}
where $a$ and $\phi$ are continuous, $a$ is $T$-periodic with 
average $\avrg{a}\ne 0$. We point out that the assumption $\avrg{a}\ne 0$ 
serves to generalize the constant coefficient of $\dot y (t)$ in \eqref{eq:sunflower}. 
Ideally, $a(t)$ can be can be thought as a perturbation of the constant term $-\alpha/r$, 
namely $ a(t) = - \alpha/r + \epsilon (t) $ where $ \epsilon (t)$ is continuous,  
$T$-periodic and sufficiently small so that $\avrg{a}= -\alpha/r + \avrg{\epsilon}\ne 0$.

We wish to look at how the results of the previous section apply to
\eqref{eq:sunflower-like}, expecially in the case in which 
$\phi\big(y(t),y(t-r)\big)=\sin \big( y(t-r)\big)$.\smallskip

Let us now recall the notion of $T$-periodic pair (or $T$-pair for
brevity) for Equation~\eqref{eq:sunflower-like} and some related
facts.

A pair $(\lambda,y)\in [0,\infty)\X C^1(\R)$ is called a \emph{$T$-pair} for 
\eqref{eq:sunflower-like} if $y$ is a $T$-periodic solution of \eqref{eq:sunflower-like}. 
A $T$-pair $(\lambda, y)$ is \emph{trivial} if $y$ is constant and $\lambda=0$.

% Arguing as in the case of diagram \eqref{dinc}, 
% by identifying any $q\in\R$ with the constant function $\hat
% q(t)\equiv q$ we can regard $[0,\infty)\times\R$ as a subset of
% $[0,\infty)\times C^1(\R)$.  Similarly, we can identify $\R$ with
% $\{0\}\times\R\subseteq [0,\infty)\times\R$.  In this way, given an
% open subset $\Omega$ of $[0,\infty)\times C^1(\R)$, by $\Omega\cap\R$
% we mean those constants $q\in\R$ such that $(0,q)\in\Omega$.  Observe
% that $[0,\infty)\times C^1(\R)$ can be regarded as a subset of
% $[0,\infty)\times C_T(\R\times\R)$ by means of the injection $j \colon
% (\lambda, y)\mapsto (\lambda, y, \dot y)$.  \smallskip

In order to study Equation~\eqref{eq:sunflower-like}, we introduce a transformation that 
allows us to rewrite this model in an equivalent but easier to handle form. We need the 
following technical lemma whose proof is a standard ODE argument which we provide for 
the sake of completeness.

\begin{lemma}\label{lem-link}
Let $a\colon\R\to\R$ be as in \eqref{eq:sunflower-like} and such that $\avrg{a}\neq 0$. 
Then, there exists a unique $T$-periodic $C^1$ function $\sigma\colon\R\to\R$ for which
\begin{equation}\label{eq.transf}
 a(t)=\frac{\dot\sigma(t)}{\sigma(t)} - \sigma(t),\qquad\text{for all $t\in\R$}.
\end{equation}
Clearly, $\sigma$ has constant sign so that, in particular $\avrg \sigma \ne 0$.
%
%  ,, $T>0$ given, and of 
%  (in particular $\avrg \sigma \ne 0$) which is solution of the
%  initial value problem
% \begin{equation} \label{utility-result}
% \left\{\begin{array} {l}
%  \dot \sigma(t) = \sigma^2(t)  + \sigma(t) a(t)\\
% \sigma (0) = \sigma_0 
% \end{array} \right.
% \,\, \textrm{ with }\,\,
%   \sigma_0 =\frac{e^{-T\avrg{a}}-1}{\int_0^Te^{-\int_s^T a(\ell)d\ell}ds},
% \end{equation}
% and such
%   that Equation~\eqref {eq:sunflower-like} can be equivalently
%   rewritten as follows
%  %provided $g(y)\equiv 1$, i.e.
%   \begin{equation}\label{app1-bis}
%     \ddot y(t)= \left(\frac{\dot\sigma (t)}{\sigma(t)}- \sigma(t)\right)\dot y(t) +
%     \lambda f(t, y(t), y(t-r)),\quad \lambda\geq 0,
%   \end{equation}
%   where $f \colon \R^2\to\R$ is as in \eqref{eq:sunflower-like}.
\end{lemma}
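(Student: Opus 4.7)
The plan is to linearize \eqref{eq.transf} via the substitution $u = 1/\sigma$. Multiplying \eqref{eq.transf} through by $\sigma$ yields the Bernoulli form $\dot\sigma = \sigma^2 + a(t)\sigma$, and setting $u = 1/\sigma$ turns it into the first-order linear scalar ODE
\[
\dot u(t) + a(t)u(t) = -1.
\]
Existence and uniqueness of a $T$-periodic solution of this linear equation is the main substantive step. Its homogeneous part has monodromy $\exp(-T\avrg{a})$, which is distinct from $1$ by the hypothesis $\avrg{a}\ne 0$, so the associated Poincar\'e return map $u(0)\mapsto u(T)$ is affine with a unique fixed point. Equivalently, using the integrating factor $e^{A(t)}$ with $A(t)=\int_0^t a(s)\dif{s}$, the periodicity requirement $u(T)=u(0)$ pins down $u(0)$ uniquely.

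I would then check that this periodic $u$ is nowhere zero. The closed-form expression $u(t)=e^{-A(t)}\bigl[u(0)-\int_0^t e^{A(s)}\dif{s}\bigr]$ has a bracket that is strictly monotone in $t$ (its derivative is $-e^{A(t)}<0$), and a short sign analysis based on the explicit formula for $u(0)$ shows that the bracket has constant sign on $[0,T]$: it is negative throughout when $\avrg{a}>0$ and positive throughout when $\avrg{a}<0$. Thus $\sigma:=1/u$ is a well-defined $T$-periodic $C^1$ function of constant sign, and differentiating $\sigma=1/u$ recovers \eqref{eq.transf}; the constant-sign property forces $\avrg{\sigma}\ne 0$.

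For uniqueness, I would observe that any $T$-periodic $C^1$ solution $\sigma$ of \eqref{eq.transf} is automatically nowhere zero, since it solves $\dot\sigma=\sigma(\sigma+a)$, whose right-hand side is locally Lipschitz in $\sigma$; a single zero of $\sigma$ would then force $\sigma\equiv 0$ by ODE uniqueness, which is inconsistent with \eqref{eq.transf}. Hence $u:=1/\sigma$ is a $T$-periodic solution of the linearized equation, and since we have already shown this solution is unique, $\sigma$ is unique as well. The only part of the argument that is not purely mechanical is verifying that the $T$-periodic solution of the linear equation has no zeros, and I expect this mild sign analysis to be the main obstacle—though, as sketched, it is handled cleanly by the dichotomy on the sign of $\avrg{a}$.
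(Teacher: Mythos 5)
Your proposal is correct and follows essentially the same route as the paper's proof: the substitution $u=1/\sigma$ reducing \eqref{eq.transf} to the linear equation $\dot u + a(t)u = -1$, the unique $T$-periodic solution obtained from $\avrg{a}\neq 0$ (the paper pins down the initial value $c_0$ explicitly, which is your affine Poincar\'e map argument), the sign dichotomy on $\avrg{a}$ to show the periodic solution never vanishes, and uniqueness transferred back through $\sigma=1/u$. No gaps.
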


As a direct consequence, we have that Equation~\eqref{eq:sunflower-like} can be rewritten as
\begin{equation}\label{app1-bis}
  \ddot y(t)= \left(\frac{\dot\sigma (t)}{\sigma(t)}- \sigma(t)\right)\dot y(t) +
   \lambda \phi\big(y(t), y(t-r)\big),\quad \lambda\geq 0,
\end{equation}
with $\sigma$ chosen as in Lemma \ref{lem-link}.

\begin{proof}[Proof of Lemma~\ref{lem-link}] 
It is easy to verify by inspection that, for any $c\in\R$,
\[
 \zeta(t):= e^{-\int_0^t a(s) ds}\left( c - \int_0^te^{\int_0^s a(\ell)d\ell}ds\right),
\]
is a solution of equation
\begin{equation}\label{eq.inz}
 \dot \zeta(t) = -\zeta(t) a(t) - 1,
\end{equation}
which corresponds to \eqref{eq.transf} under the transformation $\zeta (t)=1/\sigma(t)$.
Clearly, $\zeta(0)=c$. Taking
\[
 c=c_0:=\frac{\int_0^Te^{\int_0^s a(\ell)d\ell}ds}{e^{-T\avrg{a}}-1}e^{-T\avrg{a}},
\]
(recall that $\avrg{a}\neq 0$)
we get $\zeta(0)=\zeta(T)$. Since the right-hand-side of \eqref{eq.inz} is $T$-periodic we 
obtain that $\zeta$ is $T$-periodic as well. In fact, the above is the only choice of $c$ for 
which $\zeta(0)=\zeta(T)$; thus \eqref{eq.inz} has a unique $T$-periodic solution.

 We need to prove that the function $t\mapsto 1/\zeta(t)$
is a $T$-periodic solution of \eqref{eq.transf}. It is sufficient to show that $z(t)\neq 0$
for all $t\in\R$. We consider the two possibilities $\avrg{a}>0$ and $\avrg{a}<0$ separately:

\noindent{\bfseries Case $\mathbf{\avrg{a}>0}$.}  Clearly, $e^{-T\avrg{a}}<1$ so that, since
$e^{-T\avrg{a}}\int_0^te^{\int_0^s a(\ell)d\ell}ds>0$ we have $c<0$. Now, being 
$e^{-\int_0^t a(s) ds}>0$ for all $t\in\R$, we get $\zeta(t)<0$ for all $t$.

\noindent{\bfseries Case $\mathbf{\avrg{a}<0}$.} In this case one has $e^{-T\avrg{a}}>1$, thus
\[  
  1<\frac{e^{-T\avrg{a}}}{e^{-T\avrg{a}}-1}.
\]
Since $t\mapsto e^{-\int_0^t a(s)ds}$ is a positive function we have:
\[
 \int_0^te^{\int_0^s a(\ell)d\ell}ds<\int_0^Te^{\int_0^s a(\ell)d\ell}ds
                 <\frac{e^{-T\avrg{a}}}{e^{T\avrg{a}}-1}\int_0^Te^{\int_0^s a(\ell)d\ell}ds,
\]
so that $\zeta(t)>0$.

Thus, in both cases, we find a $T$-periodic solution of \eqref{eq.transf}. The uniqueness, 
follows from the fact that if $t\mapsto\sigma(t)$ is a $T$-periodic solution of 
\eqref{eq.transf}, hence defined for all $t\in\R$, then $\sigma(t)\neq 0$. Then,
$t\mapsto 1/\sigma(t)$ is a $T$-periodic solution of \eqref{eq.inz} which, as discussed
above, is unique.
\end{proof}

\begin{remark}\label{sign-avg}
 From the proof of Lemma \ref{lem-link} it follows that $\sigma$ has (constantly) the opposite 
 sign of the average  $\avrg{a}$. This fact has the obvious consequence that the signs of the
 averages of $\sigma$ and of $1/\sigma$ coincide with $-\sign (\avrg{a})$. 
 For the average $\avrg{\sigma}$ of $\sigma$ something more precise can be be deduced by the 
 following simple argument:
\[
 \avrg{a}=\frac{1}{T}\int_0^Ta(t)\dif t
         = \frac{1}{T}\int_0^T\frac{\dot \sigma(t)}{\sigma(t)}\dif t 
                                 -  \frac{1}{T}\int_0^T\sigma(t)\dif t
         =-\avrg{\sigma}.
\]
In fact, 
\[
\int_0^T\frac{\dot\sigma(t)}{\sigma(t)}\dif t 
          = \ln\big(\sigma(T)\big)-\ln\big(\sigma(0)\big)=0
\]
because of the $T$-periodicity of $\sigma$.
\end{remark}

\smallskip
To investigate Equation \eqref{eq:sunflower-like} or, equivalently, \eqref{app1-bis} we follow 
the approach used in \cite[\S 5]{Sp06}. Along this path we find convenient to treat a more general 
class of equations, i.e.\
\begin{equation}\label{app1}
  \ddot y(t)= \left(\frac{\dot\gamma (t)}{\gamma(t)} - 
    \gamma(t) g\big(y(t)\big)\right)\dot y(t) +
  \lambda f\big(t, y(t), y(t-r)\big),\quad \lambda\geq 0,
\end{equation}
where $f\colon\R\X\R^2\to\R$ is continuous and $T$-periodic in $t$, $\gamma\colon\R\to\R$ is 
$T$-periodic and nonzero, and $g\colon\R\to\R$ is $C^1$. 

Introducing a new variable $x$, Equation~\eqref{app1}
can be equivalently rewritten in $\R^2$ (as in the so-called Li\'enard
plane technique) as follows:
\begin{equation}\label{app2}
  \left\{
    \begin{array}{l}
      \dot x(t)=\lambda f\big(t, y(t), y(t-r)\big)\gamma^{-1}(t),\\
      \dot y(t)=\big( x - G(y)\big) \gamma(t),
    \end{array}
  \right.\qquad
  \lambda\geq 0,
\end{equation}
where $G(y)$ is a primitive of $g(y)$ and $\gamma$ is as in Lemma \ref{lem-link}. 
Indeed, taking the derivative of the second equation in \eqref{app2}, we have
\begin{equation*} %\label{equiv}
\begin{aligned}
\ddot y(t) =&\, \dot \gamma(t) \Big( x(t)- G\big(y(t)\big) \Big) + 
\gamma(t) \dot x(t) - g\big(y(t)\big)\dot y(t)\\
= & \, \frac{\dot\gamma(t)}{\gamma(t)} \dot y(t) -g\big(y(t)\big)\dot y(t)
+ \lambda f\big(t, y(t), y(t-r)\big).
\end{aligned}
\end{equation*}
By this relation, one can easily see that \eqref{app2} is equivalent to \eqref{app1}.
Because of this equivalence, Theorem \ref{tunoRit} can be applied to \eqref{app1}:

\begin{proposition}\label{tapp}
Let $f$, $g$ and $\gamma$ be as in \eqref{app1}, and let $\Omega\subseteq[0,\infty)\X C^1_T(\R)$
be open. Define the open subset of $C_T(\R\X\R)$
\[
\h\Omega:=\big\{(\lambda,\varphi,\psi)\in[0,\infty)\X C_T(\R\X\R):
 (\lambda,\varphi)\in\Omega\big\},
\]
and, according to the notation of Theorem \ref{tunoRit},
\[
 \h\Omega_{\R^2}=\big\{(\lambda,p,q)\in[0,\infty)\X\R\X\R:(\lambda,\h p,\h q)\in\h\Omega\big\}.
\]
Consider the vector field $\nu$ in $\R^2$, given by
\[
\nu(p,q):=\big(\bar w(q), \, p-G(q)\big),
\]
with $\bar w(q):=\frac{1}{T}\int_0^Tf(t,q,q)\gamma^{-1}(t)\dif{t}$. Assume that $\nu$ is 
admissible in $\Omega_{\R^2}$ for the degree and that $\deg(\nu,\Omega_{\R^2})\neq 0$.
Then, there exists a connected set of nontrivial $T$-pairs for \eqref{app1} whose closure meets 
the set 
$
\big\{(0,\h p)\in\Omega: \bar w(p)=0\big\}
$
and, is not compact. 
\end{proposition}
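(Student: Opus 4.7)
My plan is to reduce the statement to an application of Theorem~\ref{tunoRit} on the equivalent first-order system \eqref{app2} and to transfer the resulting branch back to a branch of $T$-pairs of \eqref{app1}. First I would match \eqref{app2} against the template \eqref{main} by setting $M=N=\R$, $a(t)=\gamma(t)$, $\mathfrak{f}(t,x,y,v,w)=f(t,y,w)\gamma^{-1}(t)$, $\mathfrak{g}(x,y)=x-G(y)$ and $\mathfrak{h}\equiv 0$. Since $\gamma$ is continuous, $T$-periodic and nowhere vanishing, it has constant sign, so $\avrg{a}=\avrg{\gamma}\neq 0$, ensuring the hypothesis on $a$ required by Theorem~\ref{tunoRit}. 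A direct computation from \eqref{eq:utility-map-for-the-degree} shows that the vector field produced there is exactly $(p,q)\mapsto\big(\bar w(q),\,p-G(q)\big)$, matching the $\nu$ in the proposition.

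Next, using the Li\'enard-type substitution $x(t)=\dot y(t)/\gamma(t)+G\big(y(t)\big)$, I would establish a bijective correspondence $\Xi\colon(\lambda,x,y)\mapsto(\lambda,y)$ between $T$-triples of \eqref{app2} in $[0,\infty)\X C_T(\R\X\R)$ and $T$-pairs of \eqref{app1} in $[0,\infty)\X C^1_T(\R)$, and verify that $\Xi$ is a homeomorphism onto its image. Continuity of $\Xi$ follows because if $(x_n,y_n)\to(x,y)$ uniformly then $\dot y_n=(x_n-G(y_n))\gamma$ converges uniformly to $\dot y$, so $y_n\to y$ in $C^1$; continuity of $\Xi^{-1}$ is immediate from the defining formula. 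Trivial triples clearly correspond to trivial pairs. A zero of $\nu$ has the form $\big(G(p),p\big)$ with $\bar w(p)=0$, and its image under $\Xi$ is the constant $T$-pair $(0,\h p)$ with $\bar w(p)=0$. Furthermore, the definitions of $\h\Omega$ and $\h\Omega_{\R^2}$ are set up precisely so that $\Xi$ sends $T$-triples in $\h\Omega$ to $T$-pairs in $\Omega$ and so that $\h\Omega_{\R^2}$ corresponds to $\Omega_{\R^2}$ under the identification $p\leftrightarrow\big(G(p),p\big)$; by the excision property, the hypothesis $\deg(\nu,\Omega_{\R^2})\neq 0$ gives $\deg(\nu,\h\Omega_{\R^2})\neq 0$ as needed.

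Finally, I would apply Theorem~\ref{tunoRit} to \eqref{app2} on the open set $\h\Omega$: its conclusion produces a connected set $\Gamma'$ of nontrivial $T$-triples of \eqref{app2} in $\h\Omega$ whose closure meets $\nu^{-1}(0)\cap\h\Omega_{\R^2}$ and is not contained in any compact subset of $\h\Omega$. Taking $\Gamma=\Xi(\Gamma')$ yields the required connected set of nontrivial $T$-pairs of \eqref{app1} whose closure meets $\big\{(0,\h p)\in\Omega:\bar w(p)=0\big\}$ and is not compact. The main obstacle I anticipate is the careful verification that $\Xi$ is a homeomorphism for the appropriate topologies (the $C^0$ topology on $T$-triples versus the $C^1$-type topology natural for a second-order equation), so that connectedness, the closure condition, and the non-compactness property all transfer faithfully from $\Gamma'$ to $\Gamma$; although this follows from standard continuous dependence for \eqref{app2} and the smoothness of $\gamma$ and $G$, it is a point that must be handled with some care.
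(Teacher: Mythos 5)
Your proposal is correct and follows essentially the same route as the paper: apply Theorem~\ref{tunoRit} to the equivalent Li\'enard-type system \eqref{app2} (whose averaged vector field is exactly $\nu(p,q)=\big(\bar w(q),\,p-G(q)\big)$) and then transfer the resulting branch of $T$-triples to $T$-pairs of \eqref{app1} via the projection $(\lambda,x,y)\mapsto(\lambda,y)$. The paper's proof is terser, simply asserting this correspondence, whereas you make explicit the homeomorphism $\Xi$ with inverse $x=\dot y/\gamma+G(y)$ and the $C^0$/$C^1$ topology issue, which is a worthwhile extra verification rather than a departure in method.
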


\begin{proof}
By Theorem \ref{tunoRit}, there exists a connected set $\Gamma$ of nontrivial $T$-triples for
\eqref{app2} whose closure meets the set
\[
\left\{(0,\h p,\h q)\in\h{\Omega}: \bar w(q)=0,\;  p=G(q)\right\}
\]
and is not compact.

Observe that to any $(\lambda,y,z)\in\Gamma$ one can associate the nontrivial $T$-pair 
$(\lambda,y)$ for \eqref{app1}. In this way, one gets a connected set of nontrivial 
$T$-pairs for \eqref{app1} whose closure meets the set 
$
\left\{(0,\h p)\in\Omega:\bar w(p)=0\right\}
$
and is not compact. 
\end{proof}

\begin{example}
Consider Equation \eqref{app1} with $\gamma(t)=\sin(t)+ 2$ and $g(y)\equiv1$; that is: 
\begin{equation}\label{ex1}
 \ddot x(t)=\left(\frac{\cos(t)}{\sin(t)+ 2}-(\sin(t)+ 2)\right)\dot x(t)+\lambda x(t-r).
\end{equation}
Take $T=2\pi$. Clearly, the average $\avrg\gamma=2$ and, for any $q\in\R$,
\[ 
\bar w(q)=\frac{1}{2\pi}\int_0^{2\pi}\frac{q}{\sin (t)+2}dt=\frac{q}{\sqrt{3}}.
\]
Let $\Omega=[0,\infty)\X C_T^1(\R)$.  The vector field $\nu(p,q)=\big(q/\sqrt{3},p - q\big)$
is clearly admissible in $\h\Omega_{\R^2}=\R^2$ and has degree $1$. Then, by Proposition 
\ref{tapp}, there exists a connected set of nontrivial $2\pi$-pairs for \eqref{ex1} whose 
closure meets the set
\[
\big\{(0,\h p)\in[0,\infty)\X C^1_T(\R):\bar w(p)=0\big\}
\]
and is not compact.
\end{example}

\begin{remark}
When $\gamma(t)\equiv 1$,  the system of equations  \eqref{app2} reduces to
\begin{equation*}
  \left\{
    \begin{array}{l}
      \dot x(t)=\lambda f\big(t, y(t), y(t-r)\big),\\
      \dot y(t)=\big( x - G(y)\big),
    \end{array}
  \right.\qquad
  \lambda\geq 0,
\end{equation*}
which is equivalent to the equation
\begin{equation} \label{eq:lienard-sunflower}
 \ddot y(t) = -g\big(y(t)\big)\dot y(t) + \lambda f\big(t, y(t), y(t-r)\big).
\end{equation}
in the particular case when $f\big(t,y(t),y(t-r)\big)=f\big(y(t-r)\big)$, Equation 
\eqref{eq:lienard-sunflower} gives a so-called delayed Li\'enard equation (or Li\'enard 
sunflower-type equation) see, e.g., \cite{AcoLiz, DelayDiffEq, OmZan,
    Zhang, Zhang2, WeiHuang, XuLu}.
Clearly, Proposition \ref{tapp} applies (for the non-delayed case, see \cite{Sp06}).  
\end{remark}

When $f$ does not depend on $t$, Proposition \ref{tapp} combined with Lemma \ref{lem-link}
implies the main result of this section concerning Equation \eqref{eq:sunflower-like}:

\begin{theorem}\label{capp}
Let $\phi$ and $a$ be as in \eqref{eq:sunflower-like} and let $\Omega\subseteq[0,\infty)\X C^1_T(\R)$
be open. Take $W_\Omega:=\big\{p\in\R:(0,\h p)\in\Omega\big\}$, and let $w(q):=\phi(q,q)$. Assume 
that $\deg(w,W_\Omega)$ is well-defined and nonzero. Then,  there exists a connected set of nontrivial 
$T$-pairs for \eqref{eq:sunflower-like}, whose closure meets the set $\{(0,\h p)\in\Omega:w(p)=0\}$ and 
is not compact. 
\end{theorem}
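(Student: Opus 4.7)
The plan is to reduce Equation \eqref{eq:sunflower-like} to the framework of Proposition \ref{tapp}. First, I would invoke Lemma \ref{lem-link} to produce the unique $T$-periodic $C^1$ function $\sigma$ with constant sign satisfying \eqref{eq.transf}, so that \eqref{eq:sunflower-like} becomes equivalent to \eqref{app1-bis}. This equation is a special case of \eqref{app1} with $\gamma = \sigma$, $g(y)\equiv 1$, and $f(t,y,z) = \phi(y,z)$ (independent of $t$). With these choices, one may take $G(y) = y$, and since \eqref{app1} and \eqref{app2} are equivalent, any connected set of nontrivial $T$-pairs produced by Proposition \ref{tapp} for \eqref{app1} will automatically be a set of nontrivial $T$-pairs for \eqref{eq:sunflower-like}.

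Next, I would translate the ingredients of Proposition \ref{tapp} into the present notation. Defining $\h\Omega := \{(\lambda,\varphi,\psi)\in[0,\infty)\X C_T(\R\X\R) : (\lambda,\varphi)\in\Omega\}$, I get $\h\Omega_{\R^2} = W_\Omega\X\R$. Since $\phi(q,q)=w(q)$ does not depend on $t$, the averaged term is
\[
\bar w(q) = \frac{1}{T}\int_0^T \phi(q,q)\sigma^{-1}(t)\dif{t} = \avrg{\sigma^{-1}}\,w(q),
\]
which is a nonzero scalar multiple of $w(q)$ since $\sigma$ has constant sign. Hence the vector field appearing in Proposition \ref{tapp} is
\[
\nu(p,q) = \bigl(\avrg{\sigma^{-1}}\,w(q),\,\, p - q\bigr),
\]
whose zero set is $\{(q,q) : q\in W_\Omega,\, w(q)=0\}$, homeomorphic via the diagonal embedding to the compact zero set of $w$ in $W_\Omega$. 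Thus $\nu$ is admissible in $\h\Omega_{\R^2}$.

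The principal step is to verify $\deg(\nu,\h\Omega_{\R^2})\neq 0$. I would first reduce to the case in which $w$ is $C^1$ with only nondegenerate zeros by a standard approximation of admissible vector fields, then compute the Jacobian of $\nu$ at each zero $(q_0,q_0)$:
\[
D\nu(q_0,q_0) = \begin{pmatrix} 0 & \avrg{\sigma^{-1}}\,w'(q_0) \\ 1 & -1 \end{pmatrix},
\quad\det D\nu(q_0,q_0) = -\avrg{\sigma^{-1}}\,w'(q_0).
\]
Summing $\sign\det D\nu$ over zeros and invoking Remark \ref{sign-avg} (which gives $\sign\avrg{\sigma^{-1}} = -\sign\avrg{a}$), I obtain
\[
\deg(\nu,\h\Omega_{\R^2}) = -\sign\bigl(\avrg{\sigma^{-1}}\bigr)\deg(w,W_\Omega) = \sign(\avrg{a})\deg(w,W_\Omega) \ne 0.
\]
Proposition \ref{tapp} then yields a connected set of nontrivial $T$-pairs for \eqref{app1-bis} whose closure meets $\{(0,\h p)\in\Omega : \bar w(p)=0\} = \{(0,\h p)\in\Omega : w(p)=0\}$ and is not compact; equivalence of \eqref{app1-bis} with \eqref{eq:sunflower-like} finishes the argument.

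The main obstacle I foresee is managing the fact that $\h\Omega_{\R^2} = W_\Omega\X\R$ is typically unbounded in the $p$-direction: I must confirm admissibility of $\nu$ there (which reduces to compactness of its zero set, inherited from that of $w$ in $W_\Omega$), and justify the reduction of $\deg(\nu,\cdot)$ to $\deg(w,\cdot)$ for merely continuous $w$ via approximation. Both points are handled by standard degree-theoretic arguments, but deserve explicit attention since they are the bridge between the one-dimensional hypothesis on $w$ and the two-dimensional conclusion required by Proposition \ref{tapp}.
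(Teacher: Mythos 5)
Your proposal is correct and follows essentially the same route as the paper: apply Lemma \ref{lem-link} to rewrite \eqref{eq:sunflower-like} as \eqref{app1-bis} with $g\equiv 1$, $G(y)=y$, compute $\bar w(q)=\avrg{\sigma^{-1}}\,w(q)$ and $\nu(p,q)=(\bar w(q),p-q)$, establish $\deg(\nu,\h\Omega_{\R^2})=\sign(\avrg a)\deg(w,W_\Omega)$ via Remark \ref{sign-avg}, and conclude by Proposition \ref{tapp}. Your explicit admissibility check and Jacobian computation merely flesh out the step the paper dismisses with ``one easily checks,'' and they are consistent with the paper's formula.
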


\begin{proof}
By Lemma \ref{lem-link} there exists a unique $T$-periodic function of constant sign 
$\sigma\colon\R\to\R$ such that $a(t)=\dot\sigma(t)/\sigma(t)-\sigma(t)$. Therefore, 
\eqref{eq:sunflower-like} can be written in the form \eqref{app1-bis} with $f(t,p,q)=\phi(p,q)$
for all $(p,q)\in\R^2$.

Take $G(y)=y$. Then the maps $\bar w$ and $\nu$ of Proposition \ref{tapp} become, respectively
\[
 \bar w(q)=\frac{1}{T}\int_0^T\frac{\phi(q,q)}{\sigma(t)}\dif{t}
 =\phi(q,q)\,\frac{1}{T}\int_0^T\frac{\dif{t}}{\sigma(t)}
 \quad\text{and}\quad \nu(p,q) = \big(\bar w(q),p-q\big).
\]
Let $\h\Omega_{\R^2}$ as in Proposition \ref{tapp}, with $\gamma=\sigma$. One easily checks that
\begin{equation*}
 \deg\big(\nu,\h\Omega_{\R^2}\big)
    =-\sign\left(\frac{1}{T}\int_0^T\frac{\dif{t}}{\sigma(t)}\right)\deg(w,W_\Omega)
                   =\sign(\avrg a)\deg(w,W_\Omega),
\end{equation*}
the last equality being a consequence of Remark \ref{sign-avg}. Thus, $\deg(w,W_\Omega)\neq 0$ implies
$\deg\big(\nu,\h\Omega_{\R^2}\big)\neq 0$. The assertion now follows from Proposition \ref{tapp}.
\end{proof}

In the following example we consider the case of Equation~\eqref{eq:sunflower-like}
when the perturbing term $\phi\big(y(t),y(t-r)\big)=\sin\big( y(t-r)\big)$.

\begin{example}[Sunflower-like equation]
  Consider the following scalar equation:
  \begin{equation}\label{ex2}
    \ddot x(t) = a(t)\dot x(t)+\lambda\sin\big(x(t-r)\big).
  \end{equation}
  where $a\colon\R\to\R$ is as in \eqref{eq:sunflower-like}. Let $\Omega$ be the open
  subset of $[0,\infty)\X C_T^1(\R)$ given by $\Omega=[0,\infty)\X C_T^1\big((-1,1)\big)$,
  and let $W_\Omega$ be as in Theorem \ref{capp}. Let $T=2\pi$. One immediately checks 
  that $\deg(w,W_\Omega)=1$, where 
\[
  w(p)=\frac{1}{2\pi}\int_0^{2\pi}\sin(p)\dif t =\sin(p).
\]
  By Theorem \ref{capp} there exists a connected set of nontrivial $T$-pairs for 
\eqref{eq:sunflower-like}, whose closure meets the set $\{(0,\h p)\in\Omega:w(p)=0\}$ and is 
not compact. 
\end{example}

\end{document}